\DeclareMathAlphabet\mathbfcal{OMS}{cmsy}{b}{n}
\theoremstyle{plain}
  \newtheorem{theorem}{\bf Theorem}[section]
  \newtheorem{proposition}[theorem]{\bf Proposition}
  \newtheorem{lemma}[theorem]{\bf Lemma}
\theoremstyle{remark}
  \newtheorem{remark}[theorem]{\bf Remark}
  \newtheorem{definition}[theorem]{\bf Definition}
\begin{document}
	\let\svthefootnote\thefootnote
\title{Reproducing kernel for elastic Herglotz functions.}

\author{T. Luque\thanks{Departamento de An\'alisis Matem\'atico y Matem\'atica Aplicada, Facultad de Matem\'aticas, Universidad Complutense de Madrid. E-mail: t.luque@ucm.es.} \and M. C. Vilela\thanks{Departamento de Matem\'atica e Inform\'atica aplicadas a las Ingenier\'ias Civil y Naval, Universidad Polit\'ecnica de Madrid. E-mais: maricruz.vilela@upm.es.}}

\maketitle

\begin{abstract}
We study the elastic Herglotz wave functions, which are entire solutions of the 
spectral Navier equation appearing in the linearized elasticity theory with $L^2-$far-field patterns. 
We characterize in three-dimensions the set of these functions $\mathcal{W},$ as a close subspace of a Hilbert space $\mathcal{H}$ of vector valued functions such that they and their spherical gradients belong to a certain weighted $L^2$ space. This allows us to prove that $\mathcal{W}$ is a reproducing kernel Hilbert space and to calculate the reproducing kernel. Finally, we outline the proof for the two-dimensional case and give the corresponding reproducing kernel. 
\end{abstract}
\let\thefootnote\relax\footnote{{\bf Funding}:
	The first author was supported by Spanish Grant MTM2017-82160-C2-1-P, and
	the second by Spanish Grant MTM2014-57769-C3-1-P and MTM2017-85934-C3-3-P.
}
\addtocounter{footnote}{-1}\let\thefootnote\svthefootnote
\section{Introduction and statement of results}\label{intro}
The Navier equation of the dynamic linearized elasticity in a three-dimensional homogeneous and
isotropic medium is given by
\begin{equation}
\mu\Delta_{\bm{x}}\mathbf{U}(\bm{x},t)+(\lambda+\mu)\nabla_{\bm{x}}\left( \nabla_{\bm{x}}\cdot\mathbf{U}%
(\bm{x},t)\right) =\rho\ \frac{\partial^{2}}{\partial t^{2}}\mathbf{U}(\bm{x},t),
\label{Navier}
\end{equation}
where $\mathbf{U}(\bm{x},t):\mathbb{R}^3\times\mathbb{R}\longrightarrow\mathbb{C}^3$ is the time-dependent displacement field, $\rho>0$ is the constant mass
density of the medium, and $\lambda,\mu$ are the so called Lam\'{e} constants.
Here $\Delta_{\bm{x}}\mathbf{U}(\bm{x},t)$ denotes the vector Laplacian of $\mathbf{U}%
(\bm{x},t)$ with respect to the spatial variable $\bm{x},$ that is, the vector whose
components are the Laplacian with respect to the variable $\bm{x}$ of the
components of the vector $\mathbf{U}(\bm{x},t).$
0
For time-harmonic solutions of equation (\ref{Navier}), that is, for
solutions of the form $\mathbf{U}(\bm{x},t)=\mathbf{u}(\bm{x})e^{-i\omega t},$ with $%
\omega >0,$ we obtain the \emph{homogeneous spectral Navier equation}%
\begin{equation} 
\Delta ^{\ast }\mathbf{u}(\bm{x})+\rho \ \omega ^{2}\mathbf{\ u}(\bm{x})=0,
\label{spectralNavier}
\end{equation}
where%
\begin{equation*}
\Delta ^{\ast }\mathbf{u}(\bm{x})=\mu \Delta \mathbf{u}(\bm{x})+(\lambda +\mu )\nabla
(\nabla\cdot\mathbf{\,u})(\bm{x}).
\end{equation*}
Throughout this paper we will assume that the Lam\'e constants satisfy $\mu >0$ and $2\mu +\lambda >0,$ so that $\Delta^*$ is a strongly elliptic operator and, we will denote by $k_p$ and $k_s$ the speed of propagation of the longitudinal and transverse waves  respectively, which are given by
\begin{equation*}
k_p^2=\frac{\omega^2\rho}{2\mu+\lambda},
\qquad
k_s^2=\frac{\omega^2\rho}{\mu}.
\end{equation*}
We remark that in longitudinal waves, the oscillations occur in the longitudinal direction or the direction of the wave propagation. Since the compressional forces are active on these waves, they are also called compressional waves. If instead  we look for a transverse or share wave, the particles oscillate at a right angle or perpendicular to the direction of propagation. If we use the Helmholtz decomposition, we can write any solution $\mathbf{u}$ of \eqref{spectralNavier} as the sum 
\begin{equation}
\label{descomposicionHelmholtz}
\mathbf{u}=\mathbf{u}_p+\mathbf{u}_s,
\end{equation}
where $\mathbf{u}_p$ and $\mathbf{u}_s$ are called the compressional part and  the shear part of $\bf u$ respectively and are given by
\begin{equation}
\label{upus}
\mathbf{u}_p=\frac{1}{k_p^2}\nabla(\nabla\cdot\mathbf{u}),\qquad \mathbf{u}_s=\frac{1}{k_s^2}\nabla\times(\nabla\times\mathbf{u}).
\end{equation}
Observe that $\mathbf{u}_p$ and $\mathbf{u}_s$ trivially satisfy
$$
\nabla\times\mathbf{u}_p=0,\qquad\nabla\cdot\mathbf{u}_s=0;
$$
as well as they are solutions of the following \emph{vectorial homogeneous Helmholtz equations}
\begin{eqnarray}
\label{eqcompressional}
&&\Delta \mathbf{u}_p+k_p^2\mathbf{u}_p=0,
\\
\label{eqshear}
&&\Delta \mathbf{u}_s+k_s^2\mathbf{u}_s=0.
\end{eqnarray}
Moreover, the wave potential $f=\nabla\cdot\mathbf{u}$ is a solution of the scalar homogeneous Helmholtz equation with wave number $k_p,$ and $\nabla\times\mathbf{u}$ is a solution of the vectorial homogeneous Helmholtz equation with wave number $k_s.$

In order to define the three dimensional elastic Herglotz wave functions we introduce the spaces $\mathbf{L}^2(\mathbb{S}^2):=[L^2(\mathbb{S}^2)]^3$ with $\mathbb{S}^2=\{\boldsymbol{\xi}\in\mathbb{R}^3\,:\, |\boldsymbol{\xi}|=1\},$ and
\begin{eqnarray*}
	&&\mathbf{L}_p^2(\mathbb{S}^2):=\{\mathbf{g}\in\mathbf{L}^2(\mathbb{S}^2)\,:\,\boldsymbol{\xi}\times\mathbf{g}(\boldsymbol{\xi})=\mathbf{0}\},
	\\
	&&\mathbf{L}_s^2(\mathbb{S}^2):=\{\mathbf{g}\in\mathbf{L}^2(\mathbb{S}^2)\,:\,\boldsymbol{\xi}\cdot\mathbf{g}(\boldsymbol{\xi})=0\}.
\end{eqnarray*}
Observe that $\mathbf{L}^2(\mathbb{S}^2)=\mathbf{L}_p^2(\mathbb{S}^2)\oplus\mathbf{L}_s^2(\mathbb{S}^2),$ where $\oplus$ denotes a direct sum.

\begin{definition}
	\label{ElasticHerglotz3d}
	We define a three-dimensional elastic Herglotz wave function as an entire (defined on the whole of $\mathbb{R}^3$) solution $\mathbf{u}$ of the spectral Navier equation \eqref{spectralNavier} such that 
	\begin{equation}
	\label{elasticHerglotz}
	\mathbf{u}(\bm{x})
	=
	\int_{\mathbb{S}^2}
	\left(
	e^{ik_p\bm{x}\cdot\boldsymbol{\xi}}\mathbf{g}_1(\boldsymbol{\xi})
	+
	e^{ik_s\bm{x}\cdot\boldsymbol{\xi}}\mathbf{g}_2(\boldsymbol{\xi})
	\right)
	\,d\sigma(\boldsymbol{\xi}),
	\end{equation}
	where $d\sigma$ is the associated measure of $\mathbb{S}^2,$ 
	$\mathbf{g}_1\in \mathbf{L}_p^2(\mathbb{S}^2)$ is the longitudinal far-field pattern of $\mathbf{u},$ and
	$\mathbf{g}_2\in \mathbf{L}_s^2(\mathbb{S}^2)$ is the transverse one.
\end{definition}

From this definition, we obtain by the above remarks that every elastic Herglotz wave function can be represented as
\begin{equation*}
\label{elasticHerglotz2}
\mathbf{u}(\bm{x})
=
\int_{\mathbb{S}^2}
\left(
e^{ik_p\bm{x}\cdot\boldsymbol{\xi}}g_p(\boldsymbol{\xi})\,\boldsymbol{\xi}
+
e^{ik_s\bm{x}\cdot\boldsymbol{\xi}}\,\boldsymbol{\xi}\times\mathbf{g}_s(\boldsymbol{\xi})
\right)
\,d\sigma(\boldsymbol{\xi}),
\end{equation*}
where $g_p\in L^2(\mathbb{S}^2)$ is the far-field pattern of the wave potential $f=\nabla\cdot\mathbf{u},$ and $\mathbf{g}_s\in \mathbf{L}^2(\mathbb{S}^2)$ is the far-field pattern of $\nabla\times\mathbf{u}.$

In the early sixties, Hartman and Wilcox \cite{HartmanWilcox} introduced for the first time the concept of Herglotz wave function. They were defined as the entire solutions $u$ of the homogeneous Helmholtz equation $\Delta u+k^2u=0$, $k>0$, such that 
\begin{equation}
\label{HerglotzConditionHW}
\|u\|_A^2
:=
\limsup\limits_{R\rightarrow\infty}\,\frac{1}{R}
\int_{|x|<R}|u(x)|^2\,dx
<\infty.
\end{equation}
Moreover, in \cite[Theorem 4]{HartmanWilcox}, they characterize the Herglotz wave functions as the entire solutions $u$ of the homogeneous Helmholtz equation with far-field pattern in $L^2(\mathbb{S}^{n-1})$; that is, the functions $u$ that can be written as 
\begin{equation}\label{HW:farfield}
u(x)=\int_{\mathbb{S}^{n-1}} e^{ikx\cdot\xi}g(\xi)\,d\sigma(\xi),
\end{equation}
where $g\in L^2(\mathbb{S}^{n-1})$. With this definition, they extended and completed previous studies by Herglotz in 1945, about asymptotic properties and boundedness conditions for solutions of the Helmholtz equation, and by M\"uller  \cite{Muller} in 1952, about properties of entire solutions of the wave equation. 

Since then, different characterizations of these functions have been given and they have been studied in many different contexts. For example, in the language of Harmonic Analysis, Herglotz wave functions are related to the Fourier extension operator and this connection has been explored in \cite{A+F-G+P-E}, \cite{BBR} and \cite{P-E+V-D}. Specifically, in this last paper, the authors characterize the Herglotz wave functions as the entire solutions $u$ of the homogeneous Helmholtz equation such that 
\begin{equation}
\label{salvador}
\|u\|_{\mathcal{H}(\mathbb{R}^n)}^2
:=\int_{|x|>1}\left(|u(x)|^2+\left|\frac{\partial u}{\partial r}(x)\right|+\left|\frac{\partial u}{\partial \theta}(x)\right|\right)\,dx
<\infty,
\end{equation}
where $(r,\theta)$ are the polar coordinates in $\mathbb{R}^n$, $n>2.$ The proof can be found in \cite[Theorem 4]{P-E+V-D} and the case $n=2$ was first proved in \cite{A+F-G+P-E}. 

From the point of view of applications, Herglotz wave functions play an important role in the study of wave scattering problems, specially in the development of the multiple scattering theory of acoustic and electromagnetic waves.  They have also become a necessary ingredient in certain reconstruction methods for inverse scattering problems known as linear sampling methods. For more details on this subject see  \cite{Arens} and also the monograph \cite[pp.56-59 and pp.217-220]{Colton}.

In the 1990s, Dassios and Rigou \cite{DassiosRigou} extended the Herglotz wave function concept to the context of elasticity.  They defined the three-dimensional elastic Herglotz wave functions as entire solutions $\mathbf{u}$ of the spectral Navier equation satisfying the so called Herglotz condition given by
\begin{equation}
\label{HerglotzCondition}
\|\mathbf{u}\|_A^2
:=
\limsup\limits_{R\rightarrow\infty}\,\frac{1}{R}
\int_{|\bm{x}|<R}|\mathbf{u}(\bm{x})|^2\,d\bm{x}
<\infty.
\end{equation}
In \cite[Theorem4]{DassiosRigou}, they proved a one-to-one correspondence between elastic Herglotz wave functions and their far-field patterns. Indeed, they prove that this definition of elastic herglotz wave function is equivalent to Definition \ref{ElasticHerglotz3d} and
\begin{equation}\label{upusfinita}
\|{\bf u}\|_A^2\sim \|\mathbf{g}_1\|_{L^2(\mathbb{S}^2)}^2+\|\mathbf{g}_2\|_{L^2(\mathbb{S}^2)}^2\sim \|{\bf u_p}\|_A^2+\|{\bf u_s}\|_A^2.
\end{equation}
The two-dimensional case was treated later by Sevroglou and Pelekanos in \cite{Pelekanos}, extending the results by Dassios and Rigou to dimension two. 

Later on, in \cite{2d}, Barcel\'o et al. characterized the two-dimensional elastic Herglotz wave functions $\mathbf{u},$ 
in terms of a weighted $L^2$ norm involving $\mathbf{u}$ and its angular derivative $\partial_{\varphi}\mathbf{u}$
(see Theorem 1.1 of \cite{2d}\footnote{We have found a misprint on identity (6) of \cite{2d}. There, the factor $|x|$ have to be removed.}).
More precisely, given 
$\mathbf{u}=(u^1,u^2),$ they defined its angular derivative as
\begin{equation}
\label{DerivadaAngular}
\partial_{\varphi}\mathbf{u}(\bm{x})
:=(\bm{x}^{\perp}\cdot\nabla u^1(\bm{x}),\bm{x}^{\perp}\cdot\nabla u^2(\bm{x}))
\end{equation}
with
$\bm{x}^{\perp}$
denoting the vector $\bm{x}$ rotated anti-clockwise by $\pi/2.$
With this notation, they characterized the two-dimensional elastic Herglotz wave functions 
as entire solutions $\mathbf{u},$  of the two-dimensional spectral Navier equation such that
\begin{equation}
\label{norma2d}
\| \mathbf{u}\| _{\mathcal{H}(\mathbb{R}^2)}^2
:=
\int_{\mathbb{R}^{2}}
\left( |\mathbf{u}(\bm{x})|^2+|\partial_{\varphi}\mathbf{u}(\bm{x})|^2\right) 
\frac{d\bm{x}}{\langle\bm{x}\rangle^3}
<\infty,
\end{equation}
where 
$
\langle\bm{x}\rangle:=(1+|\bm{x}|^2)^{1/2}.
$

In this paper, we extend this result to the three-dimensional case. 
In order to do that, it is convenient to rewrite \eqref{DerivadaAngular} using the polar coordinates $(r,\varphi),$ with $r$ and $\varphi$ denoting the radial and angular coordinates respectively. 

With this notation, the gradient operator can be written as
\begin{equation}\label{gradiente}
\nabla 
=\widehat{\mathbf r}\,\frac{\partial}{\partial r}
+\frac{\widehat{\bm{\varphi}}}{r}\,\frac{\partial}{\partial\varphi},
\end{equation}

where $\widehat{\mathbf r}$ and $\widehat{\bm{\varphi} }$ are the unit vectors in the radial and angular direction respectively, which are given by
$$
\widehat{\mathbf r}=\frac{\bm{x}}{|\bm{x}|}=(\cos\varphi,\sin\varphi)
\qquad
\text{and}
\qquad
\widehat{\bm{\varphi} }=\frac{\bm{x}^{\perp}}{|\bm{x}|}=(-\sin\varphi,\cos\varphi),
$$
and therefore
$$
\partial_{\varphi}\mathbf{u}
=\left(\frac{\partial  u^1}{\partial \varphi},\frac{\partial u^2}{\partial \varphi} \right).
$$
Writing $\mathbf{u}=u^r\,\widehat{\mathbf r}+u^\varphi\,\widehat{\bm{\varphi}},$
we have that
$$
u^1=\cos\varphi\,u^r-\sin\varphi\,u^\varphi
\qquad\text{and}\qquad
u^2=\sin\varphi\,u^r+\cos\varphi\,u^\varphi,
$$
and thus
\begin{equation*}
\label{NormNabla2d}
|\partial_{\varphi}\mathbf{u}|^2=
\left|\frac{\partial u^\varphi}{\partial\varphi}+u^r\right|^2+
\left|\frac{\partial u^r}{\partial\varphi}-u^\varphi\right|^2.
\end{equation*}
Now we introduce the second order tensor $\nabla_{\varphi},$ defined by
\begin{equation}
\label{NablaAngular2d}
\nabla_{\varphi}\mathbf{u}
=
\widehat{\bm{\varphi}}\,\frac{\partial}{\partial\varphi}\otimes\mathbf{u}
:=
\left(\frac{\partial u^r}{\partial\varphi}-u^\varphi\right)\,\widehat{\bm{\varphi}}\otimes\widehat{\mathbf{r}}
+
\left(\frac{\partial u^\varphi}{\partial\varphi}+u^r\right)\,\widehat{\bm{\varphi}}\otimes\widehat{\bm{\varphi}}.
\end{equation}
With this notation, we have that
\begin{equation}
\label{LoMismo}
|\partial_{\varphi}\mathbf{u}|=|\nabla_{\varphi}\mathbf{u}|.
\end{equation}
Here, given two second-order tensors 
$\mathrm{A}=A_{ij}\,\mathbf{e}_i\otimes\mathbf{e}_j,$
$\mathrm{B}=B_{ij}\,\mathbf{e}_i\otimes\mathbf{e}_j,$
where we are using Einstein notation and $\{\mathbf{e}_1, \mathbf{e}_2,\mathbf{e}_3\}$ is an orthonormal basis,
we introduce the double inner product of these two tensors, given by 
$
\mathrm{A}:\mathrm{B}=A_{ij}B_{ij},
$
and the norm of $\mathrm{A}$ which is given by
\begin{equation}
\label{NormaTensor}
|\mathrm{A}|=\sqrt{\mathrm{A}:\overline{\mathrm{A}}}.
\end{equation}

In the three-dimensional case we use spherical coordinates $(r,\theta,\varphi),$ such that
\begin{equation}
\label{CoordenadasEsfericas}
\bm{x}=(r\sin\theta\cos\varphi,r\sin\theta\sin\varphi,r\cos\theta),\quad r> 0, 0<\theta<\pi,0<\varphi<2\pi.
\end{equation}
With this notation, the gradient operator can be written as
$$\nabla 
=\widehat{\mathbf r}\,\frac{\partial}{\partial r}
+\frac{1}{r}\,
\left(
\widehat{\bm{\theta}}\,\frac{\partial}{\partial\theta}
+
\frac{\widehat{\bm{\varphi}}}{\sin\theta}\,\frac{\partial}{\partial\varphi}
\right),$$
where $\widehat{\mathbf r},$ $\widehat{\bm{\theta} }$ and $\widehat{\bm{\varphi} }$ are the unit vectors in the radial and angular directions respectively.

Now we introduce the second-order tensor $\nabla_S,$ similar to the one given in \eqref{NablaAngular2d}, defined by
\begin{eqnarray}
\label{NablaAngular3d}
\nabla_S\mathbf{u}
&=&
\left(
\widehat{\bm{\theta}}\,\frac{\partial}{\partial\theta}
+
\frac{\widehat{\bm{\varphi}}}{\sin\theta}\,\frac{\partial}{\partial\varphi}
\right)\otimes\mathbf{u}
\nonumber
\\
&:=&
\left(\frac{\partial u^r}{\partial\theta}-u^\theta\right)\,\widehat{\bm{\theta}}\otimes\widehat{\mathbf{r}}
+
\left(\frac{\partial u^\theta}{\partial\theta}+u^r\right)\,\widehat{\bm{\theta}}\otimes\widehat{\bm{\theta}}
+
\frac{\partial u^\varphi}{\partial\theta}\,\widehat{\bm{\theta}}\otimes\widehat{\bm{\varphi}}
\nonumber
\\
&&+
\left(\frac{1}{\sin\theta}\,\frac{\partial u^r}{\partial\varphi}-u^\varphi\right)\,\widehat{\bm{\varphi}}\otimes\widehat{\mathbf{r}}
+
\left(\frac{1}{\sin\theta}\,\frac{\partial u^\theta}{\partial\varphi}-u^\varphi\cot\theta\right)\,\widehat{\bm{\varphi}}\otimes\widehat{\bm{\theta}}
\nonumber
\\
&&+
\left(\frac{1}{\sin\theta}\,\frac{\partial u^\varphi}{\partial\varphi}+u^r+u^\theta\cot\theta\right)\,\widehat{\bm{\varphi}}\otimes\widehat{\bm{\varphi}}.
\end{eqnarray}
We will refer to $\nabla_S\mathbf{u}$ as the spherical gradient of $\mathbf{u}.$

We introduce the main result of this paper.
\begin{theorem}
	\label{caracterizacion}
	An entire solution $\mathbf{u}$ of the spectral Navier equation is a three-di\-men\-sio\-nal elastic Herglotz wave function if and only if
	\begin{equation}
	\label{norma3d}
	\| \mathbf{u}\| _{\mathcal{H}(\mathbb{R}^3)}^2
	:=
	\int_{\mathbb{R}^{3}}
	\left( |\mathbf{u}(\bm{x})|^2+\left|\nabla_S\mathbf{u}(\bm{x})\right|^2\right) 
	\frac{d\bm{x}}{\langle\bm{x}\rangle^3}
	<\infty,
	\end{equation}	
	where $\langle\bm{x}\rangle:=(1+|\bm{x}|^2)^{1/2}$, $\nabla_S\mathbf{u}$ is given by \eqref{NablaAngular3d} and its norm by \eqref{NormaTensor}.
	
	Moreover,
	\begin{equation*}
	\| \mathbf{u}\| _{\mathcal{H}(\mathbb{R}^3)}
	\sim
	\| \mathbf{g}_1\| _{ \mathbf{L}^2(\mathbb{S}^2)}
	+
	\| \mathbf{g}_2\| _{ \mathbf{L}^2(\mathbb{S}^2)}
	\sim
	\| \mathbf{u}_p\| _{ \mathcal{H}(\mathbb{R}^3)}
	+
	\| \mathbf{u}_s\| _{ \mathcal{H}(\mathbb{R}^3)},
	\end{equation*}
	where $\mathbf{g}_1$ and $\mathbf{g}_2,$ given in \eqref{elasticHerglotz}, are the longitudinal and transverse far-field patterns of $ \mathbf{u}$ respectively.
\end{theorem}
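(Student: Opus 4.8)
The plan is to reduce the vector statement to three scalar Herglotz characterizations, to settle the scalar case by an explicit expansion in spherical harmonics, and finally to recombine the compressional and shear parts. The reduction rests on a purely geometric identity: writing $\mathbf u=\sum_{j=1}^{3}u^{j}\mathbf e_{j}$ in the fixed Cartesian frame and using that $\widehat{\bm\theta},\widehat{\bm\varphi}$ are orthonormal, the tensor \eqref{NablaAngular3d} satisfies
\[
|\nabla_S\mathbf u|^{2}=\sum_{j=1}^{3}\Big(|\partial_\theta u^{j}|^{2}+\tfrac{1}{\sin^{2}\theta}\,|\partial_\varphi u^{j}|^{2}\Big)=\sum_{j=1}^{3}|\nabla_S u^{j}|^{2},
\]
so that $\|\mathbf u\|_{\mathcal H(\mathbb R^{3})}^{2}=\sum_{j}\int_{\mathbb R^{3}}\big(|u^{j}|^{2}+|\nabla_S u^{j}|^{2}\big)\langle\bm x\rangle^{-3}\,d\bm x$; the correction terms in \eqref{NablaAngular3d} are exactly those produced by differentiating the moving frame, and the Frobenius norm is frame independent. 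Then I apply the Helmholtz splitting $\mathbf u=\mathbf u_{p}+\mathbf u_{s}$ of \eqref{descomposicionHelmholtz}: by \eqref{eqcompressional}--\eqref{eqshear} every Cartesian component of $\mathbf u_{p}$ (resp. $\mathbf u_{s}$) is an entire scalar solution of the Helmholtz equation with wavenumber $k_{p}$ (resp. $k_{s}$), which turns each term above into a scalar problem.

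For a scalar entire Helmholtz solution $v$ with wavenumber $k$, the Jacobi--Anger expansion gives $v=4\pi\sum_{\ell,m}i^{\ell}a_{\ell m}\,j_{\ell}(kr)\,Y_\ell^m$, where $a_{\ell m}$ are the spherical-harmonic coefficients of the candidate far field $g$. Using the orthonormality of $\{Y_\ell^m\}$ together with $\int_{\mathbb S^{2}}|\nabla_{\mathbb S^{2}}Y_\ell^m|^{2}\,d\sigma=\ell(\ell+1)$, the angular integration decouples and yields
\[
\|v\|_{\mathcal H(\mathbb R^{3})}^{2}=16\pi^{2}\sum_{\ell,m}\big(1+\ell(\ell+1)\big)\,|a_{\ell m}|^{2}\,I_{\ell}(k),\qquad I_{\ell}(k):=\int_{0}^{\infty}j_{\ell}(kr)^{2}\,\frac{r^{2}\,dr}{(1+r^{2})^{3/2}}.
\]
The decisive fact is the \emph{uniform} two-sided bound $\big(1+\ell(\ell+1)\big)I_{\ell}(k)\sim 1$; one proves it by splitting at the turning point $r\approx\ell/k$, using $j_{\ell}(t)\approx t^{\ell}/(2\ell+1)!!$ for $t\ll\ell$ and $j_{\ell}(t)\approx t^{-1}\sin(t-\ell\pi/2)$ for $t\gg\ell$ (the non-oscillatory average $\tfrac12$ of $\sin^{2}$ producing the main term $\sim\ell^{-2}$), and controlling the transition region by uniform asymptotics. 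Granting it, $\|v\|_{\mathcal H(\mathbb R^{3})}^{2}\sim\sum_{\ell,m}|a_{\ell m}|^{2}=\|g\|_{L^{2}(\mathbb S^{2})}^{2}$ -- here the weight $\langle\bm x\rangle^{-3}$ is essential, since the factor $\ell(\ell+1)$ coming from $\nabla_S$ is exactly compensated by $I_{\ell}\sim(1+\ell(\ell+1))^{-1}$. Summing over the three components gives $\|\mathbf u_{p}\|_{\mathcal H}\sim\|\mathbf g_{1}\|_{\mathbf L^{2}(\mathbb S^{2})}$ and $\|\mathbf u_{s}\|_{\mathcal H}\sim\|\mathbf g_{2}\|_{\mathbf L^{2}(\mathbb S^{2})}$; in the ``only if'' direction, finiteness of the norm forces the coefficients to be square summable, producing $L^{2}$ far fields, with the polarization constraints $\mathbf g_{1}\in\mathbf L^{2}_{p}$, $\mathbf g_{2}\in\mathbf L^{2}_{s}$ following from $\nabla\times\mathbf u_{p}=0$ and $\nabla\cdot\mathbf u_{s}=0$.

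The remaining and most delicate point is the recombination $\|\mathbf u\|_{\mathcal H}^{2}\sim\|\mathbf u_{p}\|_{\mathcal H}^{2}+\|\mathbf u_{s}\|_{\mathcal H}^{2}$. The upper estimate is immediate from the triangle inequality for the Hilbert norm $\|\cdot\|_{\mathcal H}$. For the lower estimate I expand the cross term in the same basis; after the angular integration it reduces to the mixed radial integrals $J_{\ell}:=\int_{0}^{\infty}j_{\ell}(k_{p}r)j_{\ell}(k_{s}r)\,r^{2}(1+r^{2})^{-3/2}\,dr$, and the whole bound follows once one proves the near-orthogonality estimate $|J_{\ell}|\le\theta\,\sqrt{I_{\ell}(k_{p})I_{\ell}(k_{s})}$ with some $\theta<1$ independent of $\ell$; this makes the $2\times2$ quadratic form in $(a^{(1)}_{\ell m},a^{(2)}_{\ell m})$ uniformly positive definite, whence $\|\mathbf u\|_{\mathcal H}^{2}\ge(1-\theta)(\|\mathbf u_{p}\|_{\mathcal H}^{2}+\|\mathbf u_{s}\|_{\mathcal H}^{2})$. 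This is the main obstacle. Its source is that, in contrast with the diagonal integrand, the product $j_{\ell}(k_{p}r)j_{\ell}(k_{s}r)$ has \emph{no} non-oscillatory part in the far zone when $k_{p}\neq k_{s}$ (the product of sines splits into $\cos((k_{p}-k_{s})r)$ and $\cos((k_{p}+k_{s})r-\ell\pi)$, both oscillating), so $J_{\ell}$ is strictly smaller than the geometric mean of the diagonal integrals; the difficulty is to make this gain quantitative and uniform across the common turning-point region of the two Bessel functions, which is exactly where $k_{p}\neq k_{s}$ is used. If $k_{p}=k_{s}$ the problem collapses to the vector Helmholtz case and the statement is immediate from the scalar step. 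Combining the three steps yields the norm equivalence together with the stated comparison with $\|\mathbf g_{1}\|+\|\mathbf g_{2}\|$.
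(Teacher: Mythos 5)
Your reduction is genuinely different from the paper's, and its diagonal part is a real simplification. The identity $|\nabla_S\mathbf{u}|^2=\sum_{j=1}^3\bigl(|\partial_\theta u^j|^2+\sin^{-2}\theta\,|\partial_\varphi u^j|^2\bigr)$ for the Cartesian components is correct (it is the three-dimensional analogue of \eqref{LoMismo}, which the paper exploits only in dimension two), and it would let you replace the paper's Hansen-vector machinery --- Lemma \ref{lema_auxiliarLMN}, Lemma \ref{lema_auxiliaresfericosLMN} and the long Legendre-function computations behind Lemma \ref{lemma:ort_sphericalgrad} --- by scalar spherical-harmonic expansions. Your scalar step, $\|v\|^2_{\mathcal{H}}\sim\sum_{\ell,m}\bigl(1+\ell(\ell+1)\bigr)|a_{\ell m}|^2 I_\ell(k)\sim\|g\|^2_{L^2(\mathbb{S}^2)}$, is fine provided the bound $\bigl(1+\ell(\ell+1)\bigr)I_\ell(k)\sim 1$ is really proved two-sided: the upper bound is the paper's \eqref{b} (quoted from the literature), and your lower-bound sketch via the average of $\sin^2$ is the standard argument. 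Note also that the paper's necessity direction is much softer than yours: it simply quotes the Dassios--Rigou equivalence \eqref{upusfinita} and the scalar characterization \eqref{salvador}, with no expansion at all.

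The gap is exactly where you locate it, and it is genuine: the uniform near-orthogonality estimate $|J_\ell|\le\theta\,\sqrt{I_\ell(k_p)\,I_\ell(k_s)}$ with $\theta<1$ independent of $\ell$ is asserted, not proved. Your far-zone cancellation argument (splitting the product of sines into $\cos((k_p-k_s)r)$ and $\cos((k_p+k_s)r-\ell\pi)$) controls only the region $r\gg\ell$; it says nothing about the range $\ell/k_s\lesssim r\lesssim\ell/k_p$ (taking $k_p<k_s$), where $j_\ell(k_s r)$ already oscillates while $j_\ell(k_p r)$ is still exponentially small, nor about the two Airy transition zones, and the sum over $(\ell,m)$ forces the constant $\theta$ to be uniform precisely there. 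This is exactly the content of the paper's Lemma \ref{integralesBessel1}, estimate \eqref{a}: using the Weber--Schafheitlin formula, the hypergeometric series and Stirling's formula, one obtains $|J_\ell|=O(\delta^\ell\,\ell^{-3/2})$ with $\delta=\min(k_p/k_s,k_s/k_p)<1$, whence $|J_\ell|/\sqrt{I_\ell(k_p)I_\ell(k_s)}=O(\delta^\ell\,\ell^{1/2})\rightarrow 0$; the finitely many remaining indices $\ell$ are handled by strict Cauchy--Schwarz, i.e.\ non-proportionality of $j_\ell(k_p\,\cdot)$ and $j_\ell(k_s\,\cdot)$, which is the same device as the paper's \eqref{LNdif}--\eqref{estima2t1}. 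So your claimed estimate is true, and importing that lemma (or reproving it via uniform large-order Bessel asymptotics) closes your argument; without it, the sufficiency implication and the equivalence $\|\mathbf{u}\|_{\mathcal{H}}^2\sim\|\mathbf{u}_p\|_{\mathcal{H}}^2+\|\mathbf{u}_s\|_{\mathcal{H}}^2$ --- the hard half of the theorem --- are not established.
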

Throughout this paper, for non-negative quantities $a$ and $b$ we use $a\lesssim b$ ($a\gtrsim b$) to denote the existence of a positive numerical constant $c$ such that $a\le c\,b$ ($a\ge c\,b$). We write $a\sim b$ if both $a\le c\,b$ and $a\ge c\,b$.

We notice that this theorem is an extension of \cite[Theorem 1.1]{2d}  to the three dimensional context. However, this extension involves important technical difficulties that require different arguments and tools. Since the interesting results in elasticity concern only dimensions $2$ and $3$, we have not addressed higher dimensions, which will certainly entail
greater technical difficulties.

The interest of the characterization given in Theorem \ref{caracterizacion} is that
it provides a structure of a Hilbert space with reproducing kernel to the space of elastic Herglotz wave functions. Roughly speaking, this means that 
we will be able to construct a second-order tensor, the so called reproducing kernel, that reproduces in some way every elastic Herglotz wave function. 

The study of reproducing kernels started at the beginning of the last century with Zaremba \cite{Zaremba1}-\cite{Zaremba2} in the context of boundary value problems for harmonic functions, and with Mercer \cite{Mercer} in the context of integral equations. Later on, Bergman \cite{Bergman} observed in his thesis the reproducing property of kernels built from orthogonal systems of harmonic and analytic functions in one and several variables. In the next years, a lot of important results were achieved by the use of these kernels and was Aronszajn \cite{Aronszajn} who systematically established a general theory. One of the main points of his work was to show the one to one correspondence between the class of reproducing kernels and the class of positive definite functions.

The reproducing kernel Hilbert spaces have many applications and recently there has been a new interest for them in different frameworks, like quantum mechanics, signal processing, probability theory and statistical learning theory.  We refer the reader to \cite{Berlinet}, \cite{Paulsen} and  the references there in for a complete account of it.

In the context of Herglotz wave functions,  \'Alvarez et al. 
characterized in \cite{A+F-G+P-E} the space of these functions in dimension two by certain Hilbert space, and proved that it is a reproducing kernel Hilbert space, calculating the reproducing kernel. Recently, P\'erez-Esteva and Valenzuela D\'iaz extended this result to higher dimensions in \cite{P-E+V-D}.

Following their ideas, we introduce the space $\mathcal{H}(\mathbb{R}^3)$ as the completion of the vector fields in $C^\infty_c(\mathbb{R}^3;\mathbb{C}^3)$ with respect to the norm $\|\cdot\|_{\mathcal{H}(\mathbb{R}^3)}$ given in \eqref{norma3d}. This is a Hilbert space with the inner product associated given by
\begin{equation}
\label{Producto}
\langle\mathbf{u},\mathbf{v}\rangle_{\mathcal{H}(\mathbb{R}^3)}:=
\int_{\mathbb{R}^{3}}
\left( \mathbf{u}(\bm{x})\cdot \overline{\mathbf{v}(\bm{x})}+\nabla_S\mathbf{u}(\bm{x}):\overline{\nabla_S\mathbf{v}(\bm{x})}\right) 
\frac{d\bm{x}}{\langle\bm{x}\rangle^3}.
\end{equation} 
Note that using polar coordinates we can rewrite
\begin{equation}\label{Producto2}
\langle {{\bf u},{\bf v}} \rangle_{\mathcal{H}(\mathbb{R}^3)}=\int_{0}^\infty \left(\langle {{\bf u},{\bf v}} \rangle_{L^2(\mathbb{S}^2)}+\langle {{\nabla_S {\bf u}},\nabla_S {\bf v}} \rangle_{L^2(\mathbb{S}^2)} \right)\,r^2\,\frac{dr}{\langle r\rangle^3},
\end{equation}
where
\begin{equation}\label{def:innerL2}
\langle {{\bf u},{\bf v}} \rangle_{L^2(\mathbb{S}^2)}=\int_{\mathbb{S}^2}{\bf u}(r\xi)\cdot\overline {{\bf v}(r\xi)}\,d \sigma(\xi)
\end{equation}
with $d\sigma$ denoting the measure induced by the Lebesgue measure over the unit sphere $\mathbb{S}^2$. 

It is important to observe that from Theorem \ref{caracterizacion}, the set of all three-dimensional elastic Herglotz wave functions,
that from now on we will denote by $\mathcal{W}(\mathbb{R}^3)$, is a closed subspace of $\mathcal{H}(\mathbb{R}^3). $ Moreover, we have the following result.

\begin{theorem}
	\label{NucleoReporductor3d}
	The set of all three-dimensional elastic Herglotz wave functions, $\mathcal{W}(\mathbb{R}^3)$, is a reproducing kernel Hilbert space with respect to the inner product given in \eqref{Producto}.
\end{theorem}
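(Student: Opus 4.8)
The strategy is the classical one: to show that $\mathcal{W}(\mathbb{R}^3)$ is a reproducing kernel Hilbert space it suffices to verify that, for every fixed $\bm{y}\in\mathbb{R}^3$, the vector evaluation functional $\mathbf{u}\mapsto\mathbf{u}(\bm{y})$ is continuous on $\mathcal{W}(\mathbb{R}^3)$, and then to invoke the Riesz representation theorem in its matrix-valued form. We recall first that, by Theorem \ref{caracterizacion}, $\mathcal{W}(\mathbb{R}^3)$ is a closed subspace of the Hilbert space $\mathcal{H}(\mathbb{R}^3)$, hence itself a Hilbert space under the inner product \eqref{Producto}; moreover its elements are genuine entire functions (not mere equivalence classes), so that pointwise evaluation is meaningful.

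First I would establish the boundedness of evaluation. Fix $\bm{y}\in\mathbb{R}^3$ and let $\mathbf{u}\in\mathcal{W}(\mathbb{R}^3)$ have far-field patterns $\mathbf{g}_1\in\mathbf{L}_p^2(\mathbb{S}^2)$ and $\mathbf{g}_2\in\mathbf{L}_s^2(\mathbb{S}^2)$ as in \eqref{elasticHerglotz}. Since $|e^{ik_p\bm{y}\cdot\boldsymbol{\xi}}|=|e^{ik_s\bm{y}\cdot\boldsymbol{\xi}}|=1$ for real $\bm{y}$ and $\boldsymbol{\xi}$, the representation \eqref{elasticHerglotz} together with the Cauchy--Schwarz inequality on $\mathbb{S}^2$ yields
\begin{equation*}
|\mathbf{u}(\bm{y})|
\le
\int_{\mathbb{S}^2}\left(|\mathbf{g}_1(\boldsymbol{\xi})|+|\mathbf{g}_2(\boldsymbol{\xi})|\right)d\sigma(\boldsymbol{\xi})
\le
|\mathbb{S}^2|^{1/2}\left(\|\mathbf{g}_1\|_{\mathbf{L}^2(\mathbb{S}^2)}+\|\mathbf{g}_2\|_{\mathbf{L}^2(\mathbb{S}^2)}\right).
\end{equation*}
By the norm equivalence in Theorem \ref{caracterizacion}, the right-hand side is $\lesssim\|\mathbf{u}\|_{\mathcal{H}(\mathbb{R}^3)}$, so there is a constant $C$, \emph{independent of} $\bm{y}$, with $|\mathbf{u}(\bm{y})|\le C\,\|\mathbf{u}\|_{\mathcal{H}(\mathbb{R}^3)}$. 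In particular each scalar functional $\mathbf{u}\mapsto u^j(\bm{y})$, $j=1,2,3$, is linear and bounded on $\mathcal{W}(\mathbb{R}^3)$.

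Next I would construct the kernel. Applying the Riesz representation theorem \emph{within} the Hilbert space $\mathcal{W}(\mathbb{R}^3)$ to each functional $\mathbf{u}\mapsto u^j(\bm{y})$ produces, for every $\bm{y}$ and every $j$, a unique $\mathbf{k}_j^{\bm{y}}\in\mathcal{W}(\mathbb{R}^3)$ such that $u^j(\bm{y})=\langle\mathbf{u},\mathbf{k}_j^{\bm{y}}\rangle_{\mathcal{H}(\mathbb{R}^3)}$ for all $\mathbf{u}\in\mathcal{W}(\mathbb{R}^3)$. I would then assemble the second-order tensor kernel $K(\bm{x},\bm{y})$ by declaring its $j$-th column to be $\mathbf{k}_j^{\bm{y}}(\bm{x})$, that is $K_{ij}(\bm{x},\bm{y})=(\mathbf{k}_j^{\bm{y}})^i(\bm{x})$. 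Since each $\mathbf{k}_j^{\bm{y}}\in\mathcal{W}(\mathbb{R}^3)$ and $\mathcal{W}(\mathbb{R}^3)$ is a linear space, the field $\bm{x}\mapsto K(\bm{x},\bm{y})\mathbf{a}=\sum_j a_j\mathbf{k}_j^{\bm{y}}(\bm{x})$ belongs to $\mathcal{W}(\mathbb{R}^3)$ for every $\mathbf{a}\in\mathbb{C}^3$, and conjugate-linearity of \eqref{Producto} in the second slot gives the reproducing identity $\langle\mathbf{u},K(\cdot,\bm{y})\mathbf{a}\rangle_{\mathcal{H}(\mathbb{R}^3)}=\overline{\mathbf{a}}\cdot\mathbf{u}(\bm{y})$ for all $\mathbf{u}\in\mathcal{W}(\mathbb{R}^3)$, which is precisely the reproducing property.

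The essential work is the boundedness estimate of the second paragraph; everything after it is the standard Riesz and matrix-valued assembly. What makes the estimate painless here is that the plane-wave kernels $e^{ik_p\bm{y}\cdot\boldsymbol{\xi}}$ and $e^{ik_s\bm{y}\cdot\boldsymbol{\xi}}$ have unit modulus, so Cauchy--Schwarz already controls $|\mathbf{u}(\bm{y})|$ by the $\mathbf{L}^2(\mathbb{S}^2)$-norms of the far-field patterns, uniformly in $\bm{y}$; the only genuine input is the norm equivalence of Theorem \ref{caracterizacion}, which transfers this control to $\|\mathbf{u}\|_{\mathcal{H}(\mathbb{R}^3)}$. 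The one point requiring care, and where I would expect the main (if modest) obstacle to lie, is to carry out the Riesz step inside $\mathcal{W}(\mathbb{R}^3)$ rather than in $\mathcal{H}(\mathbb{R}^3)$, so that the representers $\mathbf{k}_j^{\bm{y}}$ are themselves elastic Herglotz wave functions and the kernel lives in the correct space; this is legitimate precisely because $\mathcal{W}(\mathbb{R}^3)$ is closed in $\mathcal{H}(\mathbb{R}^3)$.
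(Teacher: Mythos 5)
Your proof is correct, but it takes a genuinely different route from the paper's. You establish the reproducing kernel property abstractly: the representation \eqref{elasticHerglotz}, Cauchy--Schwarz on $\mathbb{S}^2$, and the norm equivalence $\|\mathbf{u}\|_{\mathcal{H}(\mathbb{R}^3)}\sim\|\mathbf{g}_1\|_{\mathbf{L}^2(\mathbb{S}^2)}+\|\mathbf{g}_2\|_{\mathbf{L}^2(\mathbb{S}^2)}$ of Theorem \ref{caracterizacion} give $|\mathbf{u}(\bm{y})|\lesssim\|\mathbf{u}\|_{\mathcal{H}(\mathbb{R}^3)}$, so point evaluations are bounded; the Riesz representation theorem applied componentwise \emph{inside} the closed subspace $\mathcal{W}(\mathbb{R}^3)$ then yields the representers $\mathbf{k}_j^{\bm{y}}$, and your assembly of the matrix-valued kernel and verification of the reproducing identity are sound (your convention places the evaluation point in the second argument, the transpose of the convention in Definition \ref{vvRKHS}, which is immaterial). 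The paper instead \emph{constructs} the kernel: starting from the almost orthogonal system $\{\mathbfcal{L}_\ell^m,\mathbfcal{M}_\ell^m,\mathbfcal{N}_\ell^m\}_{\ell,m}$ of Proposition \ref{ortoLMN}, it orthonormalizes via $\widetilde{\mathbfcal{N}}_\ell^m$ in \eqref{Ntilde}, writes down the orthogonal projection of $\mathcal{H}(\mathbb{R}^3)$ onto $\mathcal{W}(\mathbb{R}^3)$, and defines $\Gamma(\bm{x},\bm{y})$ by the series \eqref{Gammaxy} and \eqref{nucleo}, after which the real work is proving absolute and uniform convergence on compacts and convergence in $\mathcal{H}$-norm, via the Bessel estimates \eqref{Jj}, \eqref{CotaBessel} and the bounds \eqref{cotas}. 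What your argument buys is brevity: no kernel series, no convergence analysis, and the only input beyond standard Hilbert space theory is Theorem \ref{caracterizacion}. What the paper's argument buys is an explicit formula for $\Gamma(\bm{x},\bm{y})$ in terms of the Navier eigenvectors, which is one of the stated aims of the paper (the abstract promises to ``calculate the reproducing kernel'') and is what makes the kernel usable in practice; your proof establishes existence and uniqueness of the kernel but produces no closed expression for it.
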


The analogous result to Theorem \ref{caracterizacion} for dimension 2, given in \cite[Theorem 1.1 ]{2d}, will allows us to prove the analogous to Theorem \ref{NucleoReporductor3d} for the two-dimensional case. 

In order to do that, we introduce the spaces $\mathcal{H}(\mathbb{R}^2)$ 
and $\mathcal{W}(\mathbb{R}^2),$ in the same way as we did for the three-dimensional case. From \eqref{LoMismo}, the inner product associated to he norm $\|\cdot\|_{\mathcal{H}(\mathbb{R}^2)}$ given in \eqref{norma2d} can be written as
\begin{equation}
\label{Producto2d}
\langle\mathbf{u},\mathbf{v}\rangle_{\mathcal{H}(\mathbb{R}^2)}:=
\int_{\mathbb{R}^{2}}
\left( \mathbf{u}(\bm{x})\cdot \overline{\mathbf{v}(\bm{x})}+\nabla_\varphi\mathbf{u}(\bm{x}):\overline{\nabla_\varphi\mathbf{v}(\bm{x})}\right) 
\frac{d\bm{x}}{\langle\bm{x}\rangle^3},
\end{equation}  
where $\nabla_\varphi$ is given by \eqref{NablaAngular2d}. 

We have that $\mathcal{H}(\mathbb{R}^2)$ is a Hilbert space with this inner product 
and $\mathcal{W}(\mathbb{R}^2)$ is a closed subspace of $\mathcal{H}(\mathbb{R}^2).$
\begin{theorem}
	\label{NucleoReporductor2d}
	The set of all two-dimensional elastic Herglotz wave functions, $\mathcal{W}(\mathbb{R}^2)$, is a reproducing kernel Hilbert space with respect to the inner product given in \eqref{Producto2d}.
\end{theorem}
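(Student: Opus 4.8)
The plan is to deduce Theorem \ref{NucleoReporductor2d} from the two-dimensional characterization \cite[Theorem 1.1]{2d}, following the same scheme that yields the three-dimensional Theorem \ref{NucleoReporductor3d}. By that characterization, $\mathcal{W}(\mathbb{R}^2)$ is a closed subspace of the Hilbert space $\mathcal{H}(\mathbb{R}^2)$, and is therefore itself a Hilbert space with the inner product \eqref{Producto2d}; moreover its elements are genuine entire solutions, so that point evaluation is meaningful. To prove that it is a reproducing kernel Hilbert space it then suffices to show that for every fixed $\bm{x}_0\in\mathbb{R}^2$ the vector-valued point evaluation $\mathbf{u}\mapsto\mathbf{u}(\bm{x}_0)$ is bounded from $\mathcal{W}(\mathbb{R}^2)$ into $\mathbb{C}^2$; equivalently, that for each $\mathbf{a}\in\mathbb{C}^2$ the linear functional $\mathbf{u}\mapsto\mathbf{a}\cdot\mathbf{u}(\bm{x}_0)$ is continuous.

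The core of the argument is the key estimate. Every $\mathbf{u}\in\mathcal{W}(\mathbb{R}^2)$ admits the Herglotz representation (the two-dimensional analogue of \eqref{elasticHerglotz}) with far-field patterns $\mathbf{g}_1,\mathbf{g}_2$ in $\mathbf{L}^2(\mathbb{S}^1)$. Since $|e^{ik_p\bm{x}_0\cdot\boldsymbol{\xi}}|=|e^{ik_s\bm{x}_0\cdot\boldsymbol{\xi}}|=1$, the Cauchy--Schwarz inequality on $\mathbb{S}^1$ gives
\begin{equation*}
|\mathbf{u}(\bm{x}_0)|
\le
\int_{\mathbb{S}^1}\bigl(|\mathbf{g}_1(\boldsymbol{\xi})|+|\mathbf{g}_2(\boldsymbol{\xi})|\bigr)\,d\sigma(\boldsymbol{\xi})
\lesssim
\|\mathbf{g}_1\|_{\mathbf{L}^2(\mathbb{S}^1)}+\|\mathbf{g}_2\|_{\mathbf{L}^2(\mathbb{S}^1)},
\end{equation*}
with a constant independent of $\bm{x}_0$. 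Invoking now the norm equivalence furnished by \cite[Theorem 1.1]{2d}, namely $\|\mathbf{u}\|_{\mathcal{H}(\mathbb{R}^2)}\sim\|\mathbf{g}_1\|_{\mathbf{L}^2(\mathbb{S}^1)}+\|\mathbf{g}_2\|_{\mathbf{L}^2(\mathbb{S}^1)}$, one concludes $|\mathbf{u}(\bm{x}_0)|\lesssim\|\mathbf{u}\|_{\mathcal{H}(\mathbb{R}^2)}$, uniformly in $\bm{x}_0$. This is exactly the continuity of point evaluation.

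Finally I would construct the kernel. Fixing $\bm{x}_0$ and applying the Riesz representation theorem \emph{inside} the Hilbert space $\mathcal{W}(\mathbb{R}^2)$ to the continuous functionals $\mathbf{u}\mapsto\mathbf{e}_j\cdot\mathbf{u}(\bm{x}_0)$, $j=1,2$, produces representers $\mathbf{k}_{\bm{x}_0,j}\in\mathcal{W}(\mathbb{R}^2)$ with $\mathbf{e}_j\cdot\mathbf{u}(\bm{x}_0)=\langle\mathbf{u},\mathbf{k}_{\bm{x}_0,j}\rangle_{\mathcal{H}(\mathbb{R}^2)}$; taking these as the columns of a second-order tensor $\mathbb{K}(\bm{x},\bm{x}_0)$ gives the matrix-valued reproducing kernel that reproduces every $\mathbf{u}\in\mathcal{W}(\mathbb{R}^2)$ componentwise. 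I expect the only genuinely delicate point of this argument to be the continuity of point evaluation established above; once the characterization and the norm equivalence of \cite{2d} are granted, it collapses to the elementary Cauchy--Schwarz bound, after which the construction is the standard Riesz procedure. The explicit computation of the entries of $\mathbb{K}$ — expanding the representers in the angular eigenfunctions and summing the resulting series, as in the three-dimensional case — is where the real calculation resides, but it is not required to certify the reproducing kernel property asserted in the statement.
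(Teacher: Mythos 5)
Your proposal is correct, but it follows a genuinely different route from the paper's. The paper never argues through boundedness of point evaluations: it constructs the kernel explicitly. Concretely, it invokes Proposition 2.1 of \cite{2d} to get the almost orthonormal basis $\{\mathbf{e}_n,\mathbf{f}_n\}_{n\in\mathbb{Z}}$ of $\mathcal{W}(\mathbb{R}^2)$ built from $\nabla F_{n,k_p}$ and $\nabla^{\perp}F_{n,k_s}$, orthonormalizes it by replacing $\mathbf{f}_n$ with $\widetilde{\mathbf{f}}_n=\mathbf{f}_n-\langle\mathbf{f}_n,\mathbf{e}_n\rangle_{\mathcal{H}}\,\mathbf{e}_n$, writes the orthogonal projection of $\mathcal{H}(\mathbb{R}^2)$ onto $\mathcal{W}(\mathbb{R}^2)$ in this basis, and reads off the kernel as the series $\Gamma(\bm{x},\bm{y})=\sum_{n\in\mathbb{Z}}\Gamma_n(\bm{x},\bm{y})$ of tensors \eqref{nucleo2d}; the real work is then the convergence of this series (absolutely and uniformly on compacts, and in $\mathcal{W}(\mathbb{R}^2)$ for $\Gamma(\bm{x},\cdot)\bm{z}$), which, using $\langle\mathbf{f}_n,\mathbf{e}_n\rangle_{\mathcal{H}}=o(1)$ and $\|\widetilde{\mathbf{f}}_n\|_{\mathcal{H}}=O(1)$ from \cite{2d}, reduces to the convergence of $\sum_n (J_n(r))^2$ and $\sum_n |J_n(r)|$ via the Bessel asymptotics \eqref{CotaBessel}. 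You instead prove the reproducing kernel property abstractly: point evaluation is bounded --- via the two-dimensional Herglotz integral representation, Cauchy--Schwarz on $\mathbb{S}^1$, and the norm equivalence $\|\mathbf{u}\|_{\mathcal{H}(\mathbb{R}^2)}\sim\|\mathbf{g}_1\|_{\mathbf{L}^2(\mathbb{S}^1)}+\|\mathbf{g}_2\|_{\mathbf{L}^2(\mathbb{S}^1)}$ of \cite[Theorem 1.1]{2d} --- and then Riesz representation inside the closed subspace $\mathcal{W}(\mathbb{R}^2)$ yields representers whose assembly into a second-order tensor satisfies both conditions of Definition \ref{vvRKHS}. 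Both arguments are valid, and your uniform bound $|\mathbf{u}(\bm{x}_0)|\lesssim\|\mathbf{u}\|_{\mathcal{H}(\mathbb{R}^2)}$ is in essence the functional-analytic content that the paper's convergence analysis establishes by hand. What your route buys is brevity and robustness: no basis, no series, no Bessel estimates. What the paper's route buys is the explicit formula for the kernel in terms of $\mathbf{e}_n$ and $\widetilde{\mathbf{f}}_n$ --- and calculating the kernel, not merely certifying its existence, is one of the stated aims of the paper (see the abstract), so your proof, while sufficient for the statement as worded, delivers strictly less information.
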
 

For shortness, throughout this paper we will write $\langle\cdot,\cdot\rangle_{\mathcal{H}}$ to denote both, $\langle\cdot,\cdot\rangle_{\mathcal{H}(\mathbb{R}^2)}$ and $\langle\cdot,\cdot\rangle_{\mathcal{H}(\mathbb{R}^3)}$, and we will use $\|\cdot\|_{\mathcal{H}}$ for the associated norms. The context will avoid any confusion.

The rest of the paper is organized as follows. The second section is devoted to the proof of the Theorem \ref{caracterizacion}, and the third one to the proofs of the Theorems \ref{NucleoReporductor3d} and \ref{NucleoReporductor2d}. Finally, the fourth section contains some technical lemmas needed in the previous sections.
\section{Characterization of $\mathcal{W}(\mathbb{R}^3)$}  \label{sec:1}
This section is devoted to the proof of Theorem \ref{caracterizacion}. We split it into two subsections. The first one contains the proof itself and certain auxiliary results. One of them (see Proposition \ref{ortoLMN} below) is the key point in the proof of the theorem. Since this proposition is quite technical, and requires of several
complementary lemmas, we postpone its proof to the second subsection.

\subsection{Proof of Theorem \ref{caracterizacion}}\label{sec:2}
As we said in \eqref{descomposicionHelmholtz}, any solution $\mathbf{u}$ of the spectral Navier equation \eqref{spectralNavier} can be written as the sum of its compressional part $\mathbf{u}_p$ and its shear part $\mathbf{u}_s$, which are given in \eqref{upus}.

On the other hand, the solenoidal character of the three-dimensional vector field $\mathbf{u}_s$ implies the existence of two scalar fields $g$ and $h$ such that
\begin{equation}
\label{poloidal}
\mathbf{u}_s(\bm{x})
=\nabla\times(\bm{x}g(\bm{x}))
+\nabla\times(\nabla\times(\bm{x}h(\bm{x}))).
\end{equation}
Moreover, the wave potentials $g$ and $h$ are solutions of the scalar homogeneous Helmholtz equation with wave number $k_s.$ 

Combining \eqref{descomposicionHelmholtz},\eqref{upus} and \eqref{poloidal} we have the following representation for the time-independent displacement field:
\begin{equation}
\label{lau}
\mathbf{u}(\bm{x})
=
\nabla f(\bm{x})
+
\nabla\times(\bm{x}g(\bm{x}))
+
\nabla\times(\nabla\times(\bm{x}h(\bm{x}))),
\end{equation}
where the wave potentials $f,g$ and $h$ satisfy the scalar Helmholtz equations
\begin{eqnarray*}
	\Delta f+k_p^2f&=&0,\\
	\Delta g+k_s^2g&=&0,\\
	\Delta h+k_s^2h&=&0.\\
\end{eqnarray*}
It is well known (see \cite[Chapter IV]{Stein-Weiss}) that every entire solution $v$ of the scalar Helmholtz equation $\Delta v+k^2 v=0$ with wave number $k>0$ in dimension three, can be expanded as 
\begin{equation}  
\label{Desarrollo}
\displaystyle v(\bm{x})=\sum_{\ell=0}^\infty\sum_{m=-\ell}^\ell a_{\ell,m}\,F_{\ell,k}^m(\bm{x}),
\end{equation}
where $a_{\ell,m}$ are certain constants and, using the spherical coordinates given in \eqref{CoordenadasEsfericas},
\begin{equation}\label{def: F}
F_{\ell,k}^m(\bm{x})=j_{\ell}(k\,r)Y_\ell^m(\theta,\varphi),
\end{equation}
with 
$j_{\ell}$ denoting the spherical Bessel function of the first kind and 
$Y_\ell^m$ is the spherical harmonic function of degree $\ell$ and order $m$.

Motivated by the expansion \eqref{Desarrollo} and the decomposition given in \eqref{lau}, we introduce the so called Navier eigenvectors or Hansen vectors (see \cite{Hansen} or \cite{Ben}) given by
\begin{eqnarray}
\mathbf{L}_\ell^m({\bm x})&:=&\nabla(F_{\ell,k_p}^{m}({\bm x})),
\label{def:L}
\\
\mathbf{M}_\ell^m({\bm x})&:=&\nabla\times({\bm x}F_{\ell,k_s}^{m}({\bm x})),
\label{def:M}\\
\mathbf{N}_\ell^m({\bm x})&:=& \nabla\times(\nabla\times({\bm x} F_{\ell,k_s}^{m}({\bm x}))).
\label{def:N}
\end{eqnarray}
For convenience, we introduce the corresponding unitary vectors with respect to the norm given in \eqref{norma3d}, that is, 
\begin{equation}\label{def:NNvectors}
\mathbfcal{L}_\ell^m({\bm x}):=\frac{{\mathbf{L}}_\ell^m({\bm x})}{\| {\mathbf{L}}_\ell^m\|_{\mathcal H}},\,\mathbfcal{M}_\ell^m(\bm x):=\frac{\mathbf{M}_\ell^m({\bm x})}{\|\mathbf{M}_\ell^m\|_{\mathcal H}},\,\mathbfcal{N}_\ell^m(\bm x):=\frac{\mathbf{N}_\ell^m({\bm x})}{\| \mathbf{N}_\ell^m\|_{\mathcal H}}.
\end{equation}
With this notation, we have that 
\begin{equation}
\label{DesarrolloLame}
\displaystyle \mathbf{u}(\bm{x})=\sum_{\ell=0}^\infty\sum_{m=-\ell}^\ell 
\left(
a_{\ell,m}\mathbfcal{L}_\ell^m({\bm x})
+
b_{\ell,m}\mathbfcal{M}_\ell^m({\bm x})
+
c_{\ell,m}\mathbfcal{N}_\ell^m({\bm x})
\right),
\end{equation}
for certain constants $a_{\ell,m},b_{\ell,m},c_{\ell,m}.$
Moreover, we have that the compressional and the shear parts of $\mathbf{u}$ can be written as
\begin{eqnarray}
\label{Desarrolloup}
\displaystyle \mathbf{u}_p(\bm{x})&=&\sum_{\ell=0}^\infty\sum_{m=-\ell}^\ell 
a_{\ell,m}\mathbfcal{L}_\ell^m({\bm x}),
\\
\label{Desarrollous}
\displaystyle \mathbf{u}_s(\bm{x})&=&\sum_{\ell=0}^\infty\sum_{m=-\ell}^\ell 
\left(
b_{\ell,m}\mathbfcal{M}_\ell^m({\bm x})
+
c_{\ell,m}\mathbfcal{N}_\ell^m({\bm x})
\right).
\end{eqnarray} 
The following lemma will help us to derive some convergence results for the series given in \eqref{Desarrollo},\eqref{DesarrolloLame}--\eqref{Desarrollous}.
\begin{lemma}
	\label{ConvergenciaMadre}
	Let $F_{\ell,k}^m$ be the function defined in \eqref{def: F}. The series
	\begin{equation}
	\label{Madre}
	\sum_{\ell=0}^\infty\sum_{m=-\ell}^\ell |F_{\ell,k}^m(\bm{x})|^2
	\end{equation}
	converges absolutely and uniformly on compact subsets of $\mathbb{R}^3.$ 
	
	Moreover, the result holds if we replace in \eqref{Madre} the function $F_{\ell,k}^m(\bm{x})$ by any derivative of any order of the function.
\end{lemma}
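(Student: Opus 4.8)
The plan is to bypass any direct manipulation of $j_\ell(kr)Y_\ell^m$ in spherical coordinates and instead reduce the whole statement to Parseval's identity on $L^2(\mathbb{S}^2)$, by means of the classical plane-wave (Rayleigh) expansion. First I would recall that
\[
e^{ik\bm{x}\cdot\boldsymbol{\xi}}=4\pi\sum_{\ell=0}^\infty\sum_{m=-\ell}^\ell i^\ell j_\ell(k|\bm{x}|)\,Y_\ell^m(\widehat{\bm{x}})\,\overline{Y_\ell^m(\boldsymbol{\xi})},
\]
and then, multiplying by $Y_\ell^m(\boldsymbol{\xi})$ and integrating over $\boldsymbol{\xi}\in\mathbb{S}^2$ against the orthonormality relations of the spherical harmonics, obtain the integral representation
\[
F_{\ell,k}^m(\bm{x})=\frac{i^{-\ell}}{4\pi}\int_{\mathbb{S}^2}e^{ik\bm{x}\cdot\boldsymbol{\xi}}\,Y_\ell^m(\boldsymbol{\xi})\,d\sigma(\boldsymbol{\xi}).
\]

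Next, since the integrand is smooth in $\bm{x}$ and $\mathbb{S}^2$ is compact, I would differentiate under the integral sign; using $\partial^\alpha_{\bm{x}}e^{ik\bm{x}\cdot\boldsymbol{\xi}}=(ik)^{|\alpha|}\boldsymbol{\xi}^\alpha e^{ik\bm{x}\cdot\boldsymbol{\xi}}$ for every multi-index $\alpha$, this gives
\[
\partial^\alpha F_{\ell,k}^m(\bm{x})=\frac{(ik)^{|\alpha|}i^{-\ell}}{4\pi}\int_{\mathbb{S}^2}g_{\bm{x},\alpha}(\boldsymbol{\xi})\,Y_\ell^m(\boldsymbol{\xi})\,d\sigma(\boldsymbol{\xi}),\qquad g_{\bm{x},\alpha}(\boldsymbol{\xi}):=\boldsymbol{\xi}^\alpha e^{ik\bm{x}\cdot\boldsymbol{\xi}},
\]
so that $\partial^\alpha F_{\ell,k}^m(\bm{x})$ is, up to a unimodular constant, the Fourier coefficient of $g_{\bm{x},\alpha}\in L^2(\mathbb{S}^2)$ relative to the complete orthonormal system $\{\overline{Y_\ell^m}\}$. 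Applying Parseval's identity yields, for each fixed $\bm{x}$,
\[
\sum_{\ell=0}^\infty\sum_{m=-\ell}^\ell\bigl|\partial^\alpha F_{\ell,k}^m(\bm{x})\bigr|^2=\frac{k^{2|\alpha|}}{(4\pi)^2}\,\|g_{\bm{x},\alpha}\|_{L^2(\mathbb{S}^2)}^2=\frac{k^{2|\alpha|}}{(4\pi)^2}\int_{\mathbb{S}^2}|\boldsymbol{\xi}^\alpha|^2\,d\sigma(\boldsymbol{\xi}),
\]
where the last step uses $|e^{ik\bm{x}\cdot\boldsymbol{\xi}}|=1$ for real $\bm{x},\boldsymbol{\xi}$. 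This proves absolute convergence at every point and, strikingly, that the sum does not depend on $\bm{x}$ (for $\alpha=0$ it recovers the classical identity $\sum_\ell(2\ell+1)j_\ell^2\equiv 1$).

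To promote this pointwise convergence to uniform convergence on compact subsets I would invoke Dini's theorem. The partial sums $S_L(\bm{x}):=\sum_{\ell\le L}\sum_m|\partial^\alpha F_{\ell,k}^m(\bm{x})|^2$ are continuous, nondecreasing in $L$ because every term is nonnegative, and converge pointwise to the function $S(\bm{x})$ computed above, which is continuous in $\bm{x}$ (in fact constant). Dini's theorem then forces the convergence to be uniform on every compact set, and since this argument is uniform in $\alpha$, it handles at once the series of the function itself and of all its derivatives.

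I expect the main obstacle to be precisely the passage from pointwise to uniform convergence together with the treatment of arbitrary-order derivatives. A direct attack — estimating derivatives of $j_\ell(kr)Y_\ell^m$ in spherical coordinates through asymptotics of the Bessel functions and of $Y_\ell^m$ — is delicate because differentiation mixes the radial and angular factors and because of the coordinate singularities at the origin and at the poles. The integral representation removes both difficulties simultaneously: it transfers every derivative onto the plane wave, where it reduces to multiplication by the bounded factor $\boldsymbol{\xi}^\alpha$, and the observation that the Parseval sum is continuous (indeed constant) in $\bm{x}$ is exactly the hypothesis that makes Dini's theorem applicable.
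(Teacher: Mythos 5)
Your proof is correct, but it takes a genuinely different route from the paper's. The paper stays in spherical coordinates: the addition theorem $\sum_{m}|Y_\ell^m|^2=\frac{2\ell+1}{4\pi}$ collapses the sum over $m$, reducing everything to the single Bessel series $\sum_\ell \ell\, j_\ell(r)^2$, whose locally uniform convergence follows from the large-order asymptotics $J_\mu(r)\sim \mu^{-1/2}\left(er/(2\mu)\right)^\mu$; derivatives are then treated through the recurrences for $j_\ell'$ and $j_\ell(r)/r$ and the derivative identities for $Y_\ell^m$, iterating order by order (``and so on''). You instead represent $F_{\ell,k}^m$ as a Herglotz-type integral over the sphere (Rayleigh/Funk--Hecke), push every Cartesian derivative onto the plane wave, and read the whole sum off from Parseval's identity, obtaining the exact, $\bm{x}$-independent value $\frac{k^{2|\alpha|}}{(4\pi)^2}\int_{\mathbb{S}^2}|\boldsymbol{\xi}^\alpha|^2\,d\sigma(\boldsymbol{\xi})$, before upgrading pointwise to locally uniform convergence with Dini's theorem. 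Your approach buys a clean treatment of all derivatives at once (multi-indices replace the paper's order-by-order iteration), an exact evaluation of the limit, and avoidance of both Bessel asymptotics and the apparent coordinate singularities at the origin and the poles; since the terms are nonnegative, Dini's theorem does legitimately convert the constant pointwise limit into uniform convergence on compacta. What the paper's more computational route buys is quantitative, term-by-term super-exponential decay of $j_\ell$ on compact sets, which is strictly stronger than convergence of the squares and is reused later: the end of the proof of Theorem \ref{NucleoReporductor3d} needs $\sum_\ell \ell\,|j_\ell(r)|<\infty$, a statement Parseval cannot deliver (it only yields $|j_\ell(r)|\lesssim \ell^{-1/2}$), whereas your Dini argument gives uniformity without any rate. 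Two small points would make your write-up airtight: justify the term-by-term integration of the Rayleigh expansion (for fixed $\bm{x}$ it converges in $L^2(\mathbb{S}^2)$ as a function of $\boldsymbol{\xi}$, or simply invoke the Funk--Hecke theorem to obtain the integral representation directly), and note explicitly that $\{\overline{Y_\ell^m}\}$ is a complete orthonormal system in $L^2(\mathbb{S}^2)$, since $\overline{Y_\ell^m}=(-1)^m Y_\ell^{-m}$.
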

\begin{proof}
	From \eqref{def: F}, using the addition theorem for spherical harmonics that states
	\begin{equation}
	\label{Adittion}
	\sum_{m=-\ell}^\ell |Y_{\ell}^m(\theta,\varphi)|^2=\frac{2\ell+1}{4\pi},
	\end{equation}
	one can see that to prove the convergence of \eqref{Madre}, it is enough to prove that the series 
	\begin{equation}
	\label{ConvergeBessel}
	\sum_{\ell=0}^\infty \ell\, \left(j_{\ell}(r)\right)^2
	\end{equation}
	converges absolutely and uniformly on compact subsets of $(0,\infty).$ This can be proved, using the well known identity 
	\begin{equation}
	\label{Jj}
	j_{\ell}(r)=\sqrt{\frac{\pi}{2r}}\,J_{\ell+\frac{1}{2}}(r),
	\end{equation}
	where $J_\mu$ denotes the ordinary Bessel functions of the first kind and order $\mu,$ and the following asymptotic expansion of $J_\mu$ (see 9.3.1 in \cite{abramowitz})
	\begin{equation}
	\label{CotaBessel}
	J_\mu(r)\sim \frac{1}{\mu^{\frac{1}{2}}}\left(\frac{e\,r}{2\,\mu}\right)^\mu,\qquad r>0,\,\mu\rightarrow\infty.
	\end{equation}
	To prove the result for first order derivatives of $F_{\ell,k}^m(\bm{x})$, if we  
	use the spherical coordinates given in \eqref{CoordenadasEsfericas} and the gradient operator defined in \eqref{gradiente} we have that
	\begin{equation}
	\label{GradienteF}
	\nabla F_{\ell,k}^m(\bm{x})=\widehat{\bm{r}}\,kj'_\ell(kr)Y_\ell^m(\theta,\varphi)
	+
	\widehat{\bm{\theta}}\,\frac{j_\ell(kr)}{r}\,\frac{\partial Y_\ell^m}{\partial \theta}(\theta,\varphi)
	+
	\widehat{\bm{\varphi}}\,j_\ell(kr)\frac{\frac{\partial Y_\ell^m}{\partial \varphi}(\theta,\varphi)}{\sin\theta}.
	\end{equation}
	Then using the identities 
	\begin{equation}
	\label{jDerivada}
	j'_\ell(r)=\frac{\ell\, j_{\ell-1}(r)-(\ell+1)\,j_{\ell+1}(r)}{2\ell+1},
	\end{equation}
	\begin{equation}
	\label{jyr}
	\frac{j_\ell(kr)}{r}=\frac{ j_{\ell-1}(r)+\,j_{\ell+1}(r)}{2\ell+1},
	\end{equation}
	\begin{equation*}
	\label{Ytheta}
	\frac{\partial Y_\ell^m}{\partial \theta}(\theta,\varphi)=m\cot\theta\, Y_\ell^m(\theta,\varphi)
	+
	\sqrt{(\ell-m)(\ell+m+1)}e^{-i\varphi}Y_\ell^m(\theta,\varphi),
	\end{equation*}
	\begin{equation*}
	\label{Yvarphi}
	\frac{\partial Y_\ell^m}{\partial \varphi}(\theta,\varphi)=im\,Y_\ell^m(\theta,\varphi),
	\end{equation*}
	and \eqref{Adittion} in \eqref{GradienteF}, 
	one gets the result from the convergence of the series given in \eqref{ConvergeBessel}.
	
	Arguing as before, we get the result for second order derivatives of $F_{\ell,k}^m(\bm{x})$, and so on.
\end{proof}
\begin{remark}
	\label{ConvergenciaSerie}
	Let $v$ be a Herglotz wave function, then the series given in \eqref{Desarrollo} converges absolutely and uniformly on compact subsets of $\mathbb{R}^3.$ Moreover, we can differentiate it repeatedly term by term with absolute and uniform convergence in compact subsets of $\mathbb{R}^3.$
	
	This is due to the Cauchy-Schwarz inequality, the fact that 
	$$
	\sum_{\ell=0}^\infty\sum_{m=-\ell}^\ell |a_{\ell,m}|^2<\infty,
	$$
	and Lemma \ref{ConvergenciaMadre}.
\end{remark}
\begin{remark}
	\label{convergenciaSerieLame}
	Let ${\bf u}$ be a elastic Herglotz wave function, then the series given in \eqref{DesarrolloLame}--\eqref{Desarrollous} converge absolutely and uniformly on compact subsets of $\mathbb{R}^3.$ Moreover, they can be differentiated term by term, with absolute and uniform convergence in compact subsets of $\mathbb{R}^3.$
	
	Using the triangle inequality, the Cauchy-Schwarz inequality, and the fact that
	$$
	\sum_{\ell=0}^\infty\sum_{m=-\ell}^\ell |a_{\ell,m}|^2<\infty,
	\quad
	\sum_{\ell=0}^\infty\sum_{m=-\ell}^\ell |b_{\ell,m}|^2<\infty,
	\quad
	\sum_{\ell=0}^\infty\sum_{m=-\ell}^\ell |c_{\ell,m}|^2<\infty,
	$$
	it is enough to prove the result for the following series:
	\begin{equation}
	\label{series}
	\sum_{\ell=0}^\infty\sum_{m=-\ell}^\ell |\mathbfcal{L}_\ell^m({\bm x})|^2,
	\quad
	\sum_{\ell=0}^\infty\sum_{m=-\ell}^\ell |\mathbfcal{M}_\ell^m({\bm x})|^2,
	\quad
	\sum_{\ell=0}^\infty\sum_{m=-\ell}^\ell |\mathbfcal{N}_\ell^m({\bm x})|^2.
	\end{equation}
	
	Actually, we can replace in \eqref{series} the unitary Hansen vectors by the Hansen vectors defined in \eqref{def:L}--\eqref{def:N}, because, as we will prove later on (see Proposition \ref{ortoLMN} bellow), we have that
	\begin{equation}
	\label{NormasUnitarios}
	\|\mathbf{L}_\ell^m\|_{\mathcal{H}}=O(1),\qquad
	\|\mathbf{N}_\ell^m\|_{\mathcal{H}}=\|\mathbf{M}_\ell^m\|_{\mathcal{H}}=O(\ell).
	\end{equation}
	
	Since the definition of the Hansen vectors are given in terms of derivatives of the function $F_{\ell,k}^m$ defined in \eqref{def: F}, the convergence result is a consequence of Lemma \ref{ConvergenciaMadre}.
\end{remark}
The key point to prove Theorem \ref{caracterizacion} is the \emph{almost} orthogonality property of the family $\{\mathbf{L}_\ell^m, \mathbf{M}_\ell^m,\mathbf{N}_\ell^m\}_{\ell,m}$ with respect to the inner product \eqref{Producto}. The following proposition shows such property.
\begin{proposition}	\label{ortoLMN}
	Let $\ell,\ell',m,m'\in\mathbb{Z}$ such that $\ell,\ell'\geq 0$, $m\in[-\ell, \ell]$ and $m'\in[-\ell', \ell']$. The Navier eigenvectors defined by \eqref{def:L}--\eqref{def:N} satisfy that
	\begin{align}
	\langle {\mathbf{L}_\ell^m,\mathbf{L}_{\ell'}^{m'}}\rangle_{\mathcal{H}} &=O(1)\delta_{mm'}\delta_{\ell\ell'},
	\qquad \ell\rightarrow\infty,\nonumber\\
	\langle {\mathbf{M}_\ell^m,\mathbf{M}_{\ell'}^{m'}} \rangle_{\mathcal{H}}&=O(\ell^2)\delta_{mm'}\delta_{\ell\ell'},
	\qquad \ell\rightarrow\infty,\nonumber\\
	\langle {\mathbf{N}_\ell^m,\mathbf{N}_{\ell'}^{m'}} \rangle_{\mathcal{H}}&=O(\ell^2)\delta_{mm'}\delta_{\ell\ell'},
	\qquad \ell\rightarrow\infty,\nonumber\\
	\langle {\mathbf{L}_\ell^m,\mathbf{M}_{\ell'}^{m'}} \rangle_{\mathcal{H}}&=\langle {\mathbf{M}_\ell^m,\mathbf{N}_{\ell'}^{m'}} \rangle_{\mathcal{H}}=0,
	\qquad \ell\rightarrow\infty,\nonumber\\
	\langle {\mathbf{L}_\ell^m,\mathbf{N}_{\ell'}^{m'}} \rangle_{\mathcal{H}}&=o(1)\,\delta_{\ell\ell'}\delta_{mm'},
	\qquad \ell\rightarrow\infty,\label{ort-norma-lmn-3}
	\end{align}
	where $\delta$ denotes the Kronecker delta and $\langle\cdot,\cdot\rangle_{\mathcal{ H}}$ is the inner product defined in \eqref{Producto}.
\end{proposition}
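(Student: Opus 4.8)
The plan is to exploit the separation of variables built into \eqref{def:L}--\eqref{def:N}: each Navier eigenvector is a radial profile (a spherical Bessel function or its derivative) times a \emph{vector spherical harmonic}, so that the inner product \eqref{Producto2} factors into an angular integral over $\mathbb{S}^2$ and a radial integral in $r$. Writing $\mathbf{P}_\ell^m:=Y_\ell^m\,\widehat{\mathbf r}$, $\mathbf{B}_\ell^m:=\big(\widehat{\bm\theta}\,\partial_\theta+\tfrac{\widehat{\bm\varphi}}{\sin\theta}\,\partial_\varphi\big)Y_\ell^m$ and $\mathbf{C}_\ell^m:=\widehat{\mathbf r}\times\mathbf{B}_\ell^m$ for the three mutually $L^2(\mathbb{S}^2)$-orthogonal families, formulas \eqref{gradiente}, \eqref{GradienteF} and the identity $\nabla\times(\mathbf{x}F)=-\mathbf{x}\times\nabla F$ give
\begin{align*}
\mathbf{L}_\ell^m&=k_p\,j_\ell'(k_p r)\,\mathbf{P}_\ell^m+\frac{j_\ell(k_p r)}{r}\,\mathbf{B}_\ell^m,\qquad
\mathbf{M}_\ell^m=-\,j_\ell(k_s r)\,\mathbf{C}_\ell^m,\\
\mathbf{N}_\ell^m&=\frac{\ell(\ell+1)}{r}\,j_\ell(k_s r)\,\mathbf{P}_\ell^m+\frac{1}{r}\,\frac{d}{dr}\big(r\,j_\ell(k_s r)\big)\,\mathbf{B}_\ell^m,
\end{align*}
where the last identity follows from $\nabla\times\nabla\times(\mathbf{x}F)=\nabla\big(\partial_r(rF)\big)+k_s^2\,\mathbf{x}F$ together with the spherical Bessel equation. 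The decisive structural fact is that $\mathbf{L}_\ell^m$ and $\mathbf{N}_\ell^m$ live in the poloidal span of $\{\mathbf{P}_\ell^m,\mathbf{B}_\ell^m\}$, whereas $\mathbf{M}_\ell^m$ lives in the orthogonal toroidal line spanned by $\mathbf{C}_\ell^m$.

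Next I would dispatch the angular integrations. For the first summand of \eqref{Producto2}, the $L^2(\mathbb{S}^2)$-orthogonality of $\{\mathbf{P}_\ell^m,\mathbf{B}_\ell^m,\mathbf{C}_\ell^m\}$ across families, degrees and orders is classical, with $\|\mathbf{P}_\ell^m\|_{L^2(\mathbb{S}^2)}^2=1$ and $\|\mathbf{B}_\ell^m\|_{L^2(\mathbb{S}^2)}^2=\|\mathbf{C}_\ell^m\|_{L^2(\mathbb{S}^2)}^2=\ell(\ell+1)$; in particular the toroidal field $\mathbf{M}_\ell^m$ is orthogonal to the poloidal $\mathbf{L}_{\ell'}^{m'}$ and $\mathbf{N}_{\ell'}^{m'}$. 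For the second summand the same must survive the tensor $\nabla_S$ of \eqref{NablaAngular3d}. Integrating by parts on $\mathbb{S}^2$ one rewrites $\langle\nabla_S\mathbf{u},\nabla_S\mathbf{v}\rangle_{L^2(\mathbb{S}^2)}=\langle\mathbf{u},\nabla_S^{*}\nabla_S\mathbf{v}\rangle_{L^2(\mathbb{S}^2)}$, and the complementary lemma to establish is that $\nabla_S^{*}\nabla_S$ keeps the degree $\ell$ and the poloidal and toroidal sectors separate; combined with $L^2(\mathbb{S}^2)$-orthogonality this again yields $\delta_{\ell\ell'}\delta_{mm'}$ and makes the poloidal--toroidal pairings vanish identically. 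The factor $\delta_{mm'}$ is in any case immediate from the $\varphi$-integration of $e^{i(m-m')\varphi}$, since no operation in \eqref{NablaAngular3d} shifts the azimuthal frequency. Hence $\langle\mathbf{L}_\ell^m,\mathbf{M}_{\ell'}^{m'}\rangle_{\mathcal H}=\langle\mathbf{M}_\ell^m,\mathbf{N}_{\ell'}^{m'}\rangle_{\mathcal H}=0$, and the remaining four quantities survive only on the diagonal $\ell=\ell'$, $m=m'$.

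It then remains to estimate the diagonal radial integrals. After the angular integration, each diagonal term is a radial integral of a product of $j_\ell,j_\ell'$ (evaluated at $k_pr$ or $k_sr$) against $r^2\langle r\rangle^{-3}\,dr$, weighted by an angular energy: applying $\nabla_S$ to a degree-$\ell$ vector harmonic multiplies its $L^2(\mathbb{S}^2)$-norm squared by a factor of order $\ell^2$, so $\|\nabla_S\mathbf{B}_\ell^m\|_{L^2(\mathbb{S}^2)}^2$ and $\|\nabla_S\mathbf{C}_\ell^m\|_{L^2(\mathbb{S}^2)}^2$ are of order $\ell^4$, while $\|\nabla_S\mathbf{P}_\ell^m\|_{L^2(\mathbb{S}^2)}^2$ is of order $\ell^2$. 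Feeding in the Bessel asymptotics $\int_0^\infty|j_\ell(kr)|^2\,r^2\langle r\rangle^{-3}\,dr\sim\ell^{-2}$ and $\int_0^\infty|j_\ell(kr)|^2\,\langle r\rangle^{-3}\,dr\sim\ell^{-4}$ (with the analogues for $j_\ell'$), which follow from \eqref{Jj}--\eqref{CotaBessel} and the complementary lemmas, the two summands of $\|\mathbf{L}_\ell^m\|_{\mathcal H}^2$ are each $O(1)$, whereas for the shear vectors the gradient summand dominates and gives $\|\mathbf{M}_\ell^m\|_{\mathcal H}^2=\|\mathbf{N}_\ell^m\|_{\mathcal H}^2=O(\ell^2)$.

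The hard part is the last relation, $\langle\mathbf{L}_\ell^m,\mathbf{N}_\ell^m\rangle_{\mathcal H}=o(1)$. Since $\|\mathbf{L}_\ell^m\|_{\mathcal H}=O(1)$ and $\|\mathbf{N}_\ell^m\|_{\mathcal H}=O(\ell)$, Cauchy--Schwarz yields only $O(\ell)$, and in fact the dominant (gradient) poloidal contribution to this pairing is individually of size $O(\ell)$, so the required smallness can only come from genuine cancellation. The mechanism I would exploit is that $\mathbf{L}_\ell^m$ is irrotational and $\mathbf{N}_\ell^m$ solenoidal, so at a common wave number the two would be exactly orthogonal; the residual $o(1)$ then measures the mismatch $k_p\neq k_s$. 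Quantitatively, the radial integrands carry products such as $j_\ell(k_pr)\,\big(rj_\ell(k_sr)\big)'$ whose leading oscillations have nonzero frequencies $k_s\pm k_p$, so an integration by parts in $r$ — combined with careful control of the turning-point region $r\sim\ell$, where the Bessel functions leave their evanescent regime — beats the $O(\ell)$ bound down to $o(1)$. Supplying this cancellation uniformly, and in particular handling the turning-point contribution, is the technical heart of the proposition and the purpose of the lemmas postponed to the next subsection.
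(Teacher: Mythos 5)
Your structural framework coincides with the paper's: the same decomposition of $\mathbf{L}_\ell^m,\mathbf{M}_\ell^m,\mathbf{N}_\ell^m$ into radial Bessel profiles times the Hansen harmonics ${\bf P}_\ell^m,{\bf B}_\ell^m,{\bf C}_\ell^m$, the same poloidal/toroidal angular orthogonality (the paper's Lemmas \ref{lema_auxiliarLMN}, \ref{lema_auxiliaresfericosLMN} and \ref{lemma:ort_sphericalgrad}, which it proves by explicit Legendre-function computations rather than by your proposed integration by parts on $\mathbb{S}^2$), and the same radial asymptotics \eqref{b} for the diagonal terms. Up to the (provable, but only asserted) claim that your operator $\nabla_S^{*}\nabla_S$ preserves the degree and the poloidal/toroidal sectors, this correctly yields the first four relations with the stated orders $O(1)$, $O(\ell^2)$, $O(\ell^2)$ and $0$.

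The genuine gap is the last relation, and your own sketch concedes it: the required cancellation for $\langle\mathbf{L}_\ell^m,\mathbf{N}_\ell^m\rangle_{\mathcal{H}}$ is named but never carried out, and the mechanism you propose is unlikely to close as described. Concretely, after the angular reduction the spherical-gradient part of this pairing carries a coefficient $\sim\ell^3$ multiplying $\int_0^\infty j_{\ell\pm1}(k_pr)\,j_{\ell\pm1}(k_sr)\,r^2\langle r\rangle^{-3}dr$, so you need this cross integral to be $o(\ell^{-3})$; Cauchy--Schwarz gives only $O(\ell^{-2})$. The paper obtains far more, namely the exponential bound \eqref{a}, $O(\delta^\ell\ell^{-3/2})$ with $\delta=\min(k_p/k_s,k_s/k_p)<1$, by evaluating the integral exactly as a Weber--Schafheitlin integral (hypergeometric series plus Gamma/Stirling asymptotics); the smallness comes from the mismatch of the evanescent regions of the two Bessel functions, not from oscillatory cancellation alone. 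Your route --- integration by parts against the beat frequencies $k_s\pm k_p$ with turning-point control --- runs into exactly the obstruction you flag: with the standard amplitude $(1-\nu^2/(kr)^2)^{-1/4}$ blowing up at the turning point $r\sim\nu/k$, a naive splitting at distance $\nu^{1/3}$ from the turning point leaves boundary/Airy-region contributions of size about $\ell^{-17/6}$, which is \emph{larger} than the needed $\ell^{-3}$; rescuing this requires full uniform (Langer/Airy) asymptotics or exploiting the extra near-cancellation between the $j_{\ell-1}$ and $j_{\ell+1}$ terms, none of which is supplied. Finally, the heuristic underpinning your approach --- that at a common wave number $\mathbf{L}_\ell^m$ and $\mathbf{N}_\ell^m$ would be exactly $\mathcal{H}$-orthogonal by the irrotational/solenoidal dichotomy --- is false for the weighted inner product \eqref{Producto}: at $k_p=k_s=k$ the radial factor $|j_{\ell-1}(kr)|^2-|j_{\ell+1}(kr)|^2$ does not integrate to zero against $r^2\langle r\rangle^{-3}dr$, so the claimed orthogonality cannot serve even as the limiting case of your argument.
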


\emph{Proof of Theorem  \ref{caracterizacion}.} 
We first prove the necessary condition. Since $\bf u\in\mathcal{W}$, from \eqref{HerglotzCondition}, we have that $\|{\bf u}\|_A<\infty.$ Moreover, using \eqref{upusfinita}, it follows that $\|{\bf u_p}\|_A$, $\|{\bf u_s}\|_A<\infty$, where ${\bf u_p}$ and ${\bf u_s}$ are defined in \eqref{upus}. Since ${\bf u_p}$ and ${\bf u_s}$ are solutions of the vectorial Helmholtz equations \eqref{eqcompressional} and \eqref{eqshear} respectively, we have from \eqref{HerglotzConditionHW} that its components are Herglotz wave functions. Using the characterization of the Herglotz wave functions given in \eqref{salvador}, we have that $\|{\bf u_p}\|_{\mathcal{H}}$, $\|{\bf u_s}\|_{\mathcal{H}}<\infty.$ From here, since ${\bf u}={\bf u_p}+{\bf u_s}$, we get $\|{\bf u}\|_{\mathcal{H}}<\infty.$

For the sufficient condition, we consider that $\|\bf u\|_{\mathcal{ H}}<\infty.$ Taking the expansion \eqref{DesarrolloLame}, the definition of the inner product \eqref{Producto} and the orthogonality properties in Proposition \ref{ortoLMN} we have that
\begin{align}\label{suma1}
\|{\bf u}\|_{\mathcal{ H}}^2&\geq\sum_{\ell=0}^\infty\sum_{m=-\ell}^\ell \Big(|a_{\ell,m}|^2+|b_{\ell,m}|^2+|c_{\ell,m}|^2\Big)\nonumber\nonumber\\
&- \sum_{\ell=0}^\infty\sum_{m=-\ell}^\ell|a_{\ell,m}||c_{\ell,m}|\langle\mathbfcal{L}_{\ell}^m,\mathbfcal{N}_{\ell}^m\rangle_{\mathcal{H}}|.
\end{align}
We notice that, on one hand, \eqref{NormasUnitarios} and \eqref{ort-norma-lmn-3} assures that there exists $\ell_0\in\mathbb{N}$ such that
\begin{equation}\label{estima1t1}
|\langle\mathbfcal{L}_{\ell}^m,\mathbfcal{N}_{\ell}^m\rangle_{\mathcal{H}}|<\frac{1}{2},\quad\forall \ell>\ell_0,\quad m\in\mathbb{Z}\cap[-\ell,\ell].
\end{equation}
On the other hand, observe that 
\begin{equation}\label{LNdif}
|\langle\mathbfcal{L}_{\ell}^m,\mathbfcal{N}_{\ell}^m\rangle_{\mathcal{H}}|\neq 1,\quad\forall \ell\in\mathbb{Z}^+,\quad m\in\mathbb{Z}\cap[-\ell,\ell],
\end{equation}
since in other case $\mathbfcal{N}_{\ell}^m=c\mathbfcal{L}_{\ell}^m$ and using \eqref{def:NNvectors}, \eqref{def:L} and \eqref{def:N} we would conclude that 
$\Delta F_{\ell,k_p}^{m}=0,$ which is false. Then from here, we have that there exists $0<c<1$ such that
\begin{equation}\label{estima2t1}
|\langle\mathbfcal{L}_{\ell}^m,\mathbfcal{N}_{\ell}^m\rangle_{\mathcal{H}}|<c,\quad\forall \ell\leq\ell_0,\quad m\in\mathbb{Z}\cap[-\ell,\ell].
\end{equation}
Then using Cauchy-Schwarz, \eqref{estima1t1} and \eqref{estima2t1}, we have that
\begin{align}
\sum_{\ell=0}^\infty\sum_{m=-\ell}^\ell |a_{\ell,m}||c_{\ell,m}||\langle\mathbfcal{ L}_{\ell}^m,\mathbfcal{ N}_{\ell}^m\rangle_{\mathcal{H}}|&\leq c\left(\sum_{\ell\leq\ell_0}\sum_{m=-\ell}^\ell |a_{\ell,m}|^2\right)^{\frac{1}{2}}\left(\sum_{\ell\leq \ell_0}\sum_{m=-\ell}^\ell |c_{\ell,m}|^2\right)^{\frac{1}{2}}\nonumber\\
&+\frac{1}{2}\left(\sum_{\ell>\ell_0}\sum_{m=-\ell}^\ell |a_{\ell,m}|^2\right)^{\frac{1}{2}}\left(\sum_{\ell>\ell_0}\sum_{m=-\ell}^\ell |c_{\ell,m}|^2\right)^{\frac{1}{2}}\nonumber\\
&<\frac{1}{2}\sum_{\ell=0}^\infty\sum_{m=-\ell}^\ell (|a_{\ell,m}|^2+ |c_{\ell,m}|^2)\label{estima3t1},
\end{align}
where in the last inequality we have used the fact that $2ab\leq(a^2+b^2)$. Then using \eqref{estima3t1} in \eqref{suma1} together with identities \eqref{def:NNvectors}, \eqref{Desarrolloup},\eqref{Desarrollous} and Proposition \ref{ortoLMN}, we obtain
\[\|{\bf u}\|_{\mathcal{ H}}^2\gtrsim\|{\bf u}_p\|_{\mathcal{ H}}^2+\|{\bf u}_s\|_{\mathcal{ H}}^2.\]
Then by hypothesis, we can assure that $\|{\bf u}_p\|_{\mathcal{ H}}, \|{\bf u}_s\|_{\mathcal{ H}}<\infty$. Because ${\bf u}_p$ and ${\bf u}_s$ are solutions of the vectorial Helmholtz equations \eqref{eqcompressional} and \eqref{eqshear} respectively, using the characterization of the Herglotz wave functions given in \eqref{salvador}, we have that its components are Herglotz wave functions. Therefore, by \eqref{HerglotzConditionHW}, we have that $\|{\bf u}_p\|_{A}, \|{\bf u}_s\|_{A}<\infty$ and then, \eqref{upusfinita} implies the required conclusion.
\hfill $\Box$

\subsection{Proof of Proposition \ref{ortoLMN}}\label{sub:sec21}
To prove  Proposition \ref{ortoLMN} we need the following technical lemmas
\begin{lemma}{\rm(\cite[pp.60 ]{Ben})}\label{lema_auxiliarLMN}
	Let $\ell,\ell',m,m'\in\mathbb{Z}$ such that $\ell,\ell'\geq 0$, $m\in[-\ell, \ell]$ and $m'\in[-\ell', \ell']$. The Navier eigenvectors defined by \eqref{def:L}--\eqref{def:N} satisfy the following identities.
	\begin{align*}
	\langle {\mathbf{L}_\ell^m,\mathbf{L}_{\ell'}^{m'}}\rangle_{L^2(\mathbb{S}^2)}&=\frac{k_p^2}{2\ell+1}\Big(\ell |j_{\ell-1}(k_p\,r)|^2+(\ell+1)|j_{\ell+1}(k_p\,r)|^2\Big)\,\delta_{mm'}\delta_{\ell \ell '},\\
	\langle {\mathbf{M}_\ell^m,\mathbf{M}_{\ell'}^{m'}} \rangle_{L^2(\mathbb{S}^2)}&=\ell(\ell+1)|j_{\ell}(k_s\,r)|^2\,\delta_{mm'}\delta_{\ell \ell '},\\
	\langle {\mathbf{N}_\ell^m,\mathbf{N}_{\ell'}^{m'}} \rangle_{L^2(\mathbb{S}^2)}&=\frac{k_s^2\ell(\ell+1)}{2\ell+1}\Big((\ell+1) |j_{\ell-1}(k_s\,r)|^2+\ell|j_{\ell+1}(k_s\,r)|^2\Big)\,\delta_{mm'}\delta_{\ell \ell '},\\
	\langle {\mathbf{L}_\ell^m,\mathbf{M}_{\ell'}^{m'}} \rangle_{L^2(\mathbb{S}^2)}&=\langle {\mathbf{M}_\ell^m,\mathbf{N}_{\ell'}^{m'}} \rangle_{L^2(\mathbb{S}^2)}=0,\\
	\langle {\mathbf{L}_\ell^m,\mathbf{N}_{\ell'}^{m'}} \rangle_{L^2(\mathbb{S}^2)}&=\frac{k_p\,k_s\,\ell(\ell+1)}{2\ell+1}\Big( j_{\ell-1}(k_p\,r)j_{\ell-1}(k_s\,r)\\
	&-j_{\ell+1}(k_p\,r)j_{\ell+1}(k_s\,r)\Big)\,\delta_{mm'}\delta_{\ell \ell '},
	\end{align*}
	where $\delta$ denotes the Kronecker delta and $\langle\cdot,\cdot\rangle_{L^2(\mathbb{S}^2)}$ is the inner product defined in \eqref{def:innerL2}.
\end{lemma}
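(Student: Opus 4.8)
The plan is to diagonalize the three Navier eigenvectors against the classical orthogonal basis of vector spherical harmonics on $\mathbb{S}^2$, and then to collapse each resulting radial expression to the stated closed form by means of the spherical-Bessel recurrences \eqref{jDerivada}--\eqref{jyr}. Writing $\nabla = \widehat{\mathbf r}\,\partial_r + \frac1r\nabla_{\mathbb{S}^2}$ with the surface gradient $\nabla_{\mathbb{S}^2} := \widehat{\bm{\theta}}\,\partial_\theta + \frac{\widehat{\bm{\varphi}}}{\sin\theta}\,\partial_\varphi$, I would introduce the three radial/tangential fields
\[
\mathbf{P}_\ell^m := Y_\ell^m\,\widehat{\mathbf r},\qquad
\mathbf{B}_\ell^m := \nabla_{\mathbb{S}^2}Y_\ell^m,\qquad
\mathbf{C}_\ell^m := \widehat{\mathbf r}\times\nabla_{\mathbb{S}^2}Y_\ell^m .
\]
These are mutually orthogonal in $L^2(\mathbb{S}^2)$: the radial $\mathbf{P}$ is orthogonal to the two tangential fields, and $\mathbf{B}_\ell^m\cdot\overline{\mathbf{C}_{\ell'}^{m'}}=0$ pointwise. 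Moreover, from the orthonormality of $\{Y_\ell^m\}$ and from $-\Delta_{\mathbb{S}^2}Y_\ell^m=\ell(\ell+1)Y_\ell^m$ (integrated by parts on the closed surface $\mathbb{S}^2$) one gets $\langle\mathbf{P}_\ell^m,\mathbf{P}_{\ell'}^{m'}\rangle_{L^2(\mathbb{S}^2)}=\delta_{\ell\ell'}\delta_{mm'}$ and $\langle\mathbf{B}_\ell^m,\mathbf{B}_{\ell'}^{m'}\rangle_{L^2(\mathbb{S}^2)}=\langle\mathbf{C}_\ell^m,\mathbf{C}_{\ell'}^{m'}\rangle_{L^2(\mathbb{S}^2)}=\ell(\ell+1)\,\delta_{\ell\ell'}\delta_{mm'}$. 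This is where every Kronecker factor in the statement originates.

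Next I would expand each Navier eigenvector in this basis by direct computation from the spherical-coordinate forms of $\nabla$ and $\nabla\times$. Since $\nabla\times(\bm{x}\phi)=\nabla\phi\times\bm{x}$ and $\bm{x}=r\widehat{\mathbf r}$, a short calculation gives
\[
\mathbf{L}_\ell^m = k_p\,j_\ell'(k_p r)\,\mathbf{P}_\ell^m + \frac{j_\ell(k_p r)}{r}\,\mathbf{B}_\ell^m,
\qquad
\mathbf{M}_\ell^m = -\,j_\ell(k_s r)\,\mathbf{C}_\ell^m,
\]
while the double curl $\mathbf{N}_\ell^m=\nabla\times\mathbf{M}_\ell^m$, evaluated either directly or via $\nabla\times\nabla\times=\nabla\,\mathrm{div}-\Delta$ together with $\Delta F_{\ell,k_s}^m=-k_s^2F_{\ell,k_s}^m$, yields
\[
\mathbf{N}_\ell^m = \frac{\ell(\ell+1)}{r}\,j_\ell(k_s r)\,\mathbf{P}_\ell^m + \Big(k_s\,j_\ell'(k_s r)+\frac{j_\ell(k_s r)}{r}\Big)\,\mathbf{B}_\ell^m .
\]
In particular $\mathbf{L}$ and $\mathbf{N}$ have no $\mathbf{C}$-component whereas $\mathbf{M}$ is purely of $\mathbf{C}$-type; hence $\langle\mathbf{L}_\ell^m,\mathbf{M}_{\ell'}^{m'}\rangle_{L^2(\mathbb{S}^2)}=\langle\mathbf{M}_\ell^m,\mathbf{N}_{\ell'}^{m'}\rangle_{L^2(\mathbb{S}^2)}=0$ follows at once from the orthogonality above, and all surviving products already carry the factor $\delta_{\ell\ell'}\delta_{mm'}$.

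It then remains to read off the four nonzero inner products as the matching sums of radial-coefficient products, weighted by $\|\mathbf{P}_\ell^m\|_{L^2(\mathbb{S}^2)}^2=1$ and $\|\mathbf{B}_\ell^m\|_{L^2(\mathbb{S}^2)}^2=\ell(\ell+1)$, and to simplify. For example $\langle\mathbf{L}_\ell^m,\mathbf{L}_\ell^m\rangle_{L^2(\mathbb{S}^2)}=k_p^2|j_\ell'(k_p r)|^2+\ell(\ell+1)|j_\ell(k_p r)/r|^2$; inserting \eqref{jDerivada} and \eqref{jyr}, the cross term $j_{\ell-1}j_{\ell+1}$ cancels and the expression collapses to $\frac{k_p^2}{2\ell+1}\big(\ell|j_{\ell-1}(k_p r)|^2+(\ell+1)|j_{\ell+1}(k_p r)|^2\big)$. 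The identity for $\langle\mathbf{M},\mathbf{M}\rangle$ is immediate, and those for $\langle\mathbf{N},\mathbf{N}\rangle$ and $\langle\mathbf{L},\mathbf{N}\rangle$ come from the same two recurrences once one records $j_\ell'(x)+\frac{j_\ell(x)}{x}=\frac{(\ell+1)j_{\ell-1}(x)-\ell j_{\ell+1}(x)}{2\ell+1}$; in the mixed product the two off-diagonal terms $j_{\ell-1}(k_p r)j_{\ell+1}(k_s r)$ and $j_{\ell+1}(k_p r)j_{\ell-1}(k_s r)$ cancel, leaving exactly the stated difference.

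The main obstacle I expect is fixing the $\mathbf{N}$-expansion: turning the double curl $\nabla\times\nabla\times(\bm{x}F_{\ell,k_s}^m)$ into precisely the radial coefficients $\frac{\ell(\ell+1)}{r}j_\ell(k_s r)$ and $k_s j_\ell'(k_s r)+\frac{j_\ell(k_s r)}{r}$ is the only genuinely delicate step, since any slip there corrupts both $\langle\mathbf{N},\mathbf{N}\rangle$ and $\langle\mathbf{L},\mathbf{N}\rangle$. Once that expansion is secured, the remaining work is mechanical bookkeeping with \eqref{jDerivada}--\eqref{jyr}; the one point to watch is keeping the arguments $k_p r$ and $k_s r$ separate throughout the mixed term so that the clean cancellation of the off-diagonal Bessel products actually takes place.
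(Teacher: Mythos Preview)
Your proposal is correct and is precisely the standard route. The paper does not actually prove this lemma; it is stated with a citation to \cite[p.~60]{Ben} and no argument is given. However, the very decomposition you set up---writing $\mathbf{L}_\ell^m$, $\mathbf{M}_\ell^m$, $\mathbf{N}_\ell^m$ in terms of the vector spherical (Hansen) harmonics $\mathbf{P}_\ell^m$, $\mathbf{B}_\ell^m$, $\mathbf{C}_\ell^m$---is exactly what the paper records in \eqref{redef:L}--\eqref{redef:N} (with the normalized versions \eqref{def:Pl}--\eqref{def:Cl}) when it goes on to prove the companion Lemma~\ref{lema_auxiliaresfericosLMN}. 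So your approach coincides with the paper's framework, and your reduction to the recurrences \eqref{jDerivada}--\eqref{jyr} is the intended mechanism.
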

\begin{lemma}\label{lema_auxiliaresfericosLMN}
	Let $\ell,\ell',m,m'\in\mathbb{Z}$ such that $\ell,\ell'\geq 0$, $m\in[-\ell, \ell]$ and $m'\in[-\ell', \ell']$. The Navier eigenvectors defined by \eqref{def:L}--\eqref{def:N} satisfy the following identities.
	\begin{align*}
	\langle {\nabla_S\mathbf{L}_\ell^m,\nabla_S\mathbf{L}_{\ell'}^{m'}}\rangle_{L^2(\mathbb{S}^2)}&=\frac{k_p^2}{2\ell+1}\Big(\ell^2(\ell-1) |j_{\ell-1}(k_p\,r)|^2\\
	&+(\ell+1)^2(\ell+2)|j_{\ell+1}(k_p\,r)|^2\Big)\,\delta_{mm'}\delta_{\ell \ell '},\\
	\langle {\nabla_S\mathbf{M}_\ell^m,\nabla_S\mathbf{M}_{\ell'}^{m'}} \rangle_{L^2(\mathbb{S}^2)}&=\ell^2(\ell+1)^2|j_{\ell}(k_s\,r)|^2\,\delta_{mm'}\delta_{\ell \ell '},\\
	\langle {\nabla_S\mathbf{N}_\ell^m,\nabla_S\mathbf{N}_{\ell'}^{m'}} \rangle_{L^2(\mathbb{S}^2)}&=\frac{k_s^2\,\ell^2(\ell+1)^2}{2\ell+1}\Big((\ell-1) |j_{\ell-1}(k_s\,r)|^2\\
	&+(\ell+2)|j_{\ell+1}(k_s\,r)|^2\Big)\,\delta_{mm'}\delta_{\ell \ell '},\\
	\langle {\nabla_S\mathbf{L}_\ell^m,\nabla_S\mathbf{M}_{\ell'}^{m'}} \rangle_{L^2(\mathbb{S}^2)}&=\langle {\nabla_S\mathbf{M}_\ell^m,\nabla_S\mathbf{N}_{\ell'}^{m'}} \rangle_{L^2(\mathbb{S}^2)}=0,\\
	\langle {\nabla_S\mathbf{L}_\ell^m,\nabla_S\mathbf{N}_{\ell'}^{m'}} \rangle_{L^2(\mathbb{S}^2)}&=\frac{k_p\,k_s\,\ell(\ell+1)}{2\ell+1}\Big( \ell(\ell-1)\,j_{\ell-1}(k_p\,r)j_{\ell-1}(k_s\,r)\nonumber\\
	&-(\ell+2)(\ell+1)j_{\ell+1}(k_p\,r)j_{\ell+1}(k_s\,r)\Big)\,\delta_{mm'}\delta_{\ell \ell '},
	\end{align*}
	where $\delta$ denotes the Kronecker delta, $\nabla_S$ is the spherical gradient introduced in \eqref{NablaAngular3d}, and $\langle\cdot,\cdot\rangle_{L^2(\mathbb{S}^2)}$ is the inner product defined in \eqref{def:innerL2}.
\end{lemma}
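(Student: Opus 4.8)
I would reduce the six tensor identities to a small set of purely angular integrals and then evaluate those by integration by parts on $\mathbb{S}^2$. First I record the spherical-coordinate form of the three Navier eigenvectors. Introduce the vector spherical harmonics
$$\mathbf{P}_\ell^m:=Y_\ell^m\,\widehat{\mathbf r},\qquad \mathbf{B}_\ell^m:=\widehat{\bm\theta}\,\frac{\partial Y_\ell^m}{\partial\theta}+\frac{\widehat{\bm\varphi}}{\sin\theta}\,\frac{\partial Y_\ell^m}{\partial\varphi},\qquad \mathbf{C}_\ell^m:=\widehat{\mathbf r}\times\mathbf{B}_\ell^m.$$
From \eqref{def:L} and \eqref{GradienteF} one reads off $\mathbf{L}_\ell^m=k_p j_\ell'(k_pr)\,\mathbf{P}_\ell^m+r^{-1}j_\ell(k_pr)\,\mathbf{B}_\ell^m$, and the curl computations for \eqref{def:M} and \eqref{def:N} give $\mathbf{M}_\ell^m=-j_\ell(k_sr)\,\mathbf{C}_\ell^m$ together with a two-term expression $\mathbf{N}_\ell^m=\alpha_\ell(r)\,\mathbf{P}_\ell^m+\beta_\ell(r)\,\mathbf{B}_\ell^m$, where $\alpha_\ell,\beta_\ell$ are purely radial coefficients built from $j_\ell(k_sr)$ and $j_\ell'(k_sr)$. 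The essential observation is that the operator $\nabla_S$ in \eqref{NablaAngular3d} contains only the angular derivatives $\partial_\theta,\partial_\varphi$ and algebraic connection terms, with no $\partial_r$ and no explicit factor of $r$; hence it commutes with multiplication by any radial function, so $\nabla_S(\phi(r)\mathbf{W})=\phi(r)\,\nabla_S\mathbf{W}$ for $\mathbf{W}\in\{\mathbf{P}_\ell^m,\mathbf{B}_\ell^m,\mathbf{C}_\ell^m\}$. Consequently every inner product in the statement expands, via \eqref{def:innerL2}, into products of the radial coefficients (functions of $r$, which factor out) times the angular integrals $\int_{\mathbb{S}^2}\nabla_S\mathbf{W}:\overline{\nabla_S\mathbf{W}'}\,d\sigma$ with $\mathbf{W},\mathbf{W}'\in\{\mathbf{P},\mathbf{B},\mathbf{C}\}$.

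Next I would evaluate these angular building blocks. Applying \eqref{NablaAngular3d} to each of $\mathbf{P}_\ell^m,\mathbf{B}_\ell^m,\mathbf{C}_\ell^m$ produces a second-order tensor whose entries are first- and second-order angular derivatives of $Y_\ell^m$; taking the double contraction and integrating over $\mathbb{S}^2$ leaves, in every case, a sum of integrals of products of (derivatives of) $Y_\ell^m$ against $\overline{Y_{\ell'}^{m'}}$. I would collapse all of these to the normalization $\int_{\mathbb{S}^2}|Y_\ell^m|^2\,d\sigma=1$ by repeatedly integrating by parts on the closed surface $\mathbb{S}^2$ (no boundary contributions) and invoking the eigenvalue identity $\Delta_S Y_\ell^m=-\ell(\ell+1)Y_\ell^m$ together with the orthogonality $\int_{\mathbb{S}^2}\mathbf{B}_\ell^m\cdot\overline{\mathbf{B}_{\ell'}^{m'}}\,d\sigma=\ell(\ell+1)\delta_{\ell\ell'}\delta_{mm'}$. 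This step simultaneously forces the Kronecker deltas $\delta_{\ell\ell'}\delta_{mm'}$ and produces the eigenvalue weights. In particular the purely tangential harmonic $\mathbf{C}_\ell^m$ decouples and satisfies $\int_{\mathbb{S}^2}|\nabla_S\mathbf{C}_\ell^m|^2\,d\sigma=\ell(\ell+1)\int_{\mathbb{S}^2}|\mathbf{C}_\ell^m|^2\,d\sigma=\ell^2(\ell+1)^2$, which yields at once the $\mathbf{M}$–$\mathbf{M}$ line; moreover $\int_{\mathbb{S}^2}\nabla_S\mathbf{C}_\ell^m:\overline{\nabla_S\mathbf{P}_\ell^m}\,d\sigma=\int_{\mathbb{S}^2}\nabla_S\mathbf{C}_\ell^m:\overline{\nabla_S\mathbf{B}_\ell^m}\,d\sigma=0$, which keeps the $\mathbf{L}$–$\mathbf{M}$ and $\mathbf{M}$–$\mathbf{N}$ products equal to zero exactly as in Lemma \ref{lema_auxiliarLMN}.

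Finally I would assemble the radial factors. Substituting the coefficients from the first step and using the spherical-Bessel recurrences \eqref{jDerivada} and \eqref{jyr} to re-express $k_pj_\ell'(k_pr)$ and $r^{-1}j_\ell(k_pr)$ through $j_{\ell-1}$ and $j_{\ell+1}$, the $\mathbf{P}$–$\mathbf{P}$, $\mathbf{B}$–$\mathbf{B}$ and mixed $\mathbf{P}$–$\mathbf{B}$ angular contributions recombine so that the $j_{\ell-1}$-terms acquire the weight $\ell(\ell-1)$ and the $j_{\ell+1}$-terms the weight $(\ell+1)(\ell+2)$, with the $j_{\ell-1}j_{\ell+1}$ cross terms cancelling; this reproduces the $\mathbf{L}$–$\mathbf{L}$, $\mathbf{N}$–$\mathbf{N}$ and $\mathbf{L}$–$\mathbf{N}$ identities. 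Matching the resulting expressions against Lemma \ref{lema_auxiliarLMN} provides a consistency check throughout. The hard part will be the evaluation of the angular blocks involving $\mathbf{B}_\ell^m$ — namely $\int_{\mathbb{S}^2}|\nabla_S\mathbf{B}_\ell^m|^2\,d\sigma$ and the cross integral $\int_{\mathbb{S}^2}\nabla_S\mathbf{P}_\ell^m:\overline{\nabla_S\mathbf{B}_\ell^m}\,d\sigma$ — because $\nabla_S\mathbf{B}_\ell^m$ carries second-order angular derivatives of $Y_\ell^m$ and the connection terms of \eqref{NablaAngular3d} (the $\cot\theta$ couplings and the algebraic $\pm u^\theta,\pm u^\varphi,+u^r$ contributions) must be tracked exactly. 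All the bookkeeping lives in the integrations by parts that collapse these second-derivative integrals into multiples of $\int_{\mathbb{S}^2}|Y_\ell^m|^2\,d\sigma$, and keeping the singular $1/\sin\theta$ and $\cot\theta$ factors under control in those manipulations is the real technical burden.
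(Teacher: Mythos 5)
Your proposal follows essentially the same route as the paper's proof: you rewrite $\mathbf{L}_\ell^m,\mathbf{M}_\ell^m,\mathbf{N}_\ell^m$ as radial Bessel coefficients times the Hansen harmonics $\mathbf{P}_\ell^m,\mathbf{B}_\ell^m,\mathbf{C}_\ell^m$ (the paper's \eqref{redef:L}--\eqref{redef:N}), reduce the lemma to the angular inner products $\langle\nabla_S\mathbf{W},\nabla_S\mathbf{W}'\rangle_{L^2(\mathbb{S}^2)}$ for $\mathbf{W},\mathbf{W}'\in\{\mathbf{P},\mathbf{B},\mathbf{C}\}$ --- exactly the content of the paper's Lemma \ref{lemma:ort_sphericalgrad} --- and reassemble via the recurrences \eqref{jDerivada}--\eqref{jyr}; the block values you assert (modulo your unnormalized convention for $\mathbf{B}_\ell^m,\mathbf{C}_\ell^m$, which differs from the paper's by a factor $(\ell(\ell+1))^{1/2}$ and a sign on $\mathbf{C}$) match the paper's, and the cross-term cancellation you describe is precisely what occurs. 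The only divergence is your plan to prove the angular identities by intrinsic integration by parts on $\mathbb{S}^2$ using $\Delta_S Y_\ell^m=-\ell(\ell+1)Y_\ell^m$, whereas the paper passes to associated Legendre functions and the Legendre equation \eqref{eq:legendre} on $[-1,1]$ --- an equivalent computation, since that ODE is the eigenvalue equation in the variable $x=\cos\theta$, and the pole singularities ($1/\sin\theta$, $\cot\theta$) you flag as the technical burden are the same issue the paper handles through its boundary-term analysis at $x=\pm 1$ (notably the improper-integral case $m=1$).
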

\begin{lemma}\label{integralesBessel1}
	Let $a,b>0$ and $a\neq b$. For $\langle r\rangle:=(1+r^2)^{1/2}$, the following identities hold.
	\begin{equation}\label{b}
	\int_0^\infty |j_\ell(r)|^2 \,r^2\,\frac{dr}{\langle r\rangle^3}=O(\ell^{-2}),\qquad \ell\rightarrow\infty.
	\end{equation}	
	\begin{equation}\label{a}
	\left|\int_0^\infty j_\ell(ar)j_\ell(br) \,r^2\,\frac{dr}{\langle r\rangle^3}\right|=O(\delta^\ell\,\ell^{-\frac{3}{2}}),\qquad \ell\rightarrow\infty,
	\end{equation}
	with $\delta=\min\left(\frac{a}{b},\frac{b}{a}\right)$.
\end{lemma}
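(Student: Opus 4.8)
The plan is to collapse both integrals into a single integral of a \emph{modified} Bessel function $I_\mu$, via a subordination formula for the weight together with Weber's second exponential integral, and then to read off the asymptotics by Laplace's method. First I would use \eqref{Jj} to pass to ordinary Bessel functions, writing $j_\ell(ar)j_\ell(br)=\frac{\pi}{2r\sqrt{ab}}J_\mu(ar)J_\mu(br)$ with $\mu:=\ell+\tfrac12$; after absorbing the $r^2$, both \eqref{b} (the case $a=b=1$) and \eqref{a} reduce to estimating $\int_0^\infty J_\mu(ar)J_\mu(br)(1+r^2)^{-3/2}r\,dr$. The device is the subordination identity $(1+r^2)^{-3/2}=\frac{1}{\Gamma(3/2)}\int_0^\infty s^{1/2}e^{-s}e^{-sr^2}\,ds$ combined with Weber's integral
\[
\int_0^\infty e^{-sr^2}J_\mu(ar)J_\mu(br)\,r\,dr=\frac{1}{2s}\,e^{-(a^2+b^2)/(4s)}\,I_\mu\!\Big(\frac{ab}{2s}\Big),\qquad s>0.
\]
Because each factor is $O(r^{-1/2})$ at infinity we have $\int_0^\infty(1+r^2)^{-3/2}|J_\mu(ar)J_\mu(br)|\,r\,dr<\infty$, so Fubini is legitimate, and the substitution $u=ab/(2s)$ yields
\[
\int_0^\infty J_\mu(ar)J_\mu(br)\frac{r\,dr}{(1+r^2)^{3/2}}=c(a,b)\int_0^\infty u^{-3/2}e^{-ab/(2u)}e^{-\beta u}I_\mu(u)\,du,\qquad \beta:=\frac{a^2+b^2}{2ab}\ge 1,
\]
with $c(a,b)>0$. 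In particular the left-hand side is positive, which removes the absolute value in \eqref{a}.

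For \eqref{b} we have $a=b=1$ and $\beta=1$. Here the mass of the $u$-integral sits at $u\sim\mu^2$: using the large-argument Debye asymptotic $e^{-u}I_\mu(u)\sim(2\pi u)^{-1/2}e^{-\mu^2/(2u)}$ for $u\gg\mu$ (while $e^{-u}I_\mu(u)$ is exponentially small for $u\lesssim\mu$, since there $I_\mu$ lies below its order), the integral collapses to the elementary $\frac{1}{\sqrt{2\pi}}\int_0^\infty u^{-2}e^{-(1+\mu^2)/(2u)}\,du=\frac{1}{\sqrt{2\pi}}\cdot\frac{2}{1+\mu^2}=O(\mu^{-2})$, which is \eqref{b}.

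For \eqref{a} assume $a<b$, so $\beta>1$ and $\delta=a/b$. Scaling $u=\mu z$ and inserting the uniform (Debye) asymptotic $I_\mu(\mu z)\sim(2\pi\mu)^{-1/2}(1+z^2)^{-1/4}e^{\mu\eta(z)}$, with $\eta(z)=\sqrt{1+z^2}+\ln\frac{z}{1+\sqrt{1+z^2}}$, turns the integral into a Laplace integral $\int e^{\mu\Phi(z)}(\cdots)\,dz$ with phase $\Phi(z)=\eta(z)-\beta z$. Since $\Phi'(z)=\frac{\sqrt{1+z^2}}{z}-\beta$, there is a unique maximum at $z_*=\frac{1}{\sqrt{\beta^2-1}}=\frac{2ab}{b^2-a^2}$; the crucial simplification is that at the saddle $\sqrt{1+z_*^2}=\beta z_*$, so $\Phi(z_*)=\ln\frac{z_*}{1+\sqrt{1+z_*^2}}=\ln\frac{a}{b}=\ln\delta$, giving $e^{\mu\Phi(z_*)}=\delta^\mu$. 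Counting the algebraic prefactors ($u^{-3/2}\sim\mu^{-3/2}$, $(2\pi\mu)^{-1/2}$, $du=\mu\,dz$) produces $\mu^{-1}$, and the Gaussian factor $(\mu|\Phi''(z_*)|)^{-1/2}$ a further $\mu^{-1/2}$, so the integral is $O(\mu^{-3/2}\delta^\mu)$. As $\mu=\ell+\tfrac12$ this is exactly $O(\delta^\ell\ell^{-3/2})$, i.e. \eqref{a}.

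The routine parts are the Fubini step (absolute convergence) and the algebra giving $\Phi(z_*)=\ln\delta$. The genuine obstacle is making the Laplace/Debye step rigorous: rather than bare asymptotic equivalences I would work with the known \emph{uniform} bounds for $I_\mu(\mu z)$, controlling the turning-point zone $z\approx z_*$ by a Gaussian and the tails $z\to0,\infty$ by the monotonicity of $\Phi$ away from $z_*$; for \eqref{b} the same uniform bounds justify discarding the region $u\lesssim\mu$. Since only an upper bound is wanted, it suffices to dominate $I_\mu$ by the right-hand side of its uniform asymptotic, so no sharp lower estimates are required.
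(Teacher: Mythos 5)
Your proposal is correct, but it follows a genuinely different route from the paper's. The paper proves \eqref{b} by citation (Stempak's weighted Bessel estimate, resting on the Barcel\'o--C\'ordoba asymptotics) and proves \eqref{a} by splitting the integral at $r=1$: the near piece is crushed by the large-order asymptotic \eqref{CotaBessel}, while on the far piece the weight $\langle r\rangle^{-3}$ is compared with $r^{-3}$ so that, after adding back the missing piece near the origin, everything reduces to the Weber--Schafheitlin discontinuous integral $\int_0^\infty J_{\ell+\frac12}(ar)J_{\ell+\frac12}(br)\,r^{-2}\,dr$; this is evaluated in closed form as a ${}_2F_1$ series, and the $(a/b)^\ell\,\ell^{-3/2}$ behaviour then drops out of Gamma-function identities and Stirling's formula. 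You instead combine the subordination identity for $(1+r^2)^{-3/2}$ with Weber's \emph{second exponential} integral, collapsing both \eqref{b} and \eqref{a} into the single expression $c(a,b)\int_0^\infty u^{-3/2}e^{-ab/(2u)}e^{-\beta u}I_\mu(u)\,du$, and extract the asymptotics by Laplace's method using the uniform Debye bounds for $I_\mu$; your saddle computation ($z_*=2ab/(b^2-a^2)$, $\Phi(z_*)=\ln\delta$) is correct, as is the prefactor count giving $\mu^{-3/2}$. Your route buys a unified treatment of both estimates (no external citation for \eqref{b}), the positivity of the integral in \eqref{a} (so the absolute value is redundant), and it avoids splitting an oscillatory integral or swapping weights inside it up to constants; what the paper's route buys is that, after the reduction, everything is a closed-form special-function computation — hypergeometric series plus Stirling — with no saddle-point analysis or uniform large-order asymptotics needed. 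One caution, which sits inside the "routine" work you already flagged: since $\eta(z)-z>-\tfrac{1}{2z}$ for all $z>0$, the quantity $(2\pi u)^{-1/2}e^{-\mu^2/(2u)}$ is \emph{not} literally an upper bound for $e^{-u}I_\mu(u)$; on $u\ge\mu$ the uniform bound only gives $e^{-u}I_\mu(u)\lesssim u^{-1/2}e^{-c\mu^2/u}$ for some constant $0<c<\tfrac12$ (because $-z\bigl(\eta(z)-z\bigr)$ is continuous, positive, and tends to $\tfrac12$ on $[1,\infty)$, hence bounded below by a positive constant there), while the region $u\le\mu$ is exponentially small as you say. This degraded constant still yields the $O(\mu^{-2})$ and $O(\delta^\mu\mu^{-3/2})$ bounds you claim, so the gap is cosmetic, but the domination should be stated with $c$ rather than $\tfrac12$.
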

The proof of \eqref{b} is a consequence of the asymptotics in \cite[pp.661]{juan} and the proof can be found in (6) of \cite{stempak}, taking $p=2$ and $\alpha=-1$. The proof of \eqref{a} is detailed at the end of the subsection.

\medskip
\emph{Proof of Proposition \ref{ortoLMN}.}
Using the expression of the inner product given in \eqref{Producto2}, the proof follows directly from the previous Lemma \ref{lema_auxiliarLMN}, Lemma \ref{lema_auxiliaresfericosLMN} and Lemma \ref{integralesBessel1}.
\hfill$\Box$

\medskip

It remains to prove Lemma \ref{lema_auxiliaresfericosLMN} and \eqref{a}. To prove the lemma, we rewrite the Navier eigenvectors $\mathbf{L}_\ell^m$, $\mathbf{M}_\ell^m$ and $\mathbf{N}_\ell^m$ separating the radial and the spherical parts. Since the spherical harmonic $Y_\ell^m$ is given by
\begin{equation*}
Y_\ell^m(\theta,\varphi)=\frac{1}{\sqrt{\Omega_{\ell m}}}{P}_\ell^{|m|}(cos\theta)e^{im\varphi},
\end{equation*}
where $P_\ell^m$ is the associated Legendre function and 
\begin{equation}\label{normalizing_constant}
\Omega_{\ell m}=\frac{4\pi}{2\ell+1}\frac{(\ell+|m|)!}{(\ell-|m|)!}
\end{equation}
is a normalizing constant, using identities \eqref{def: F} and \eqref{GradienteF}, it is not difficult to see that we can represent the Navier eigenvectors \eqref{def:L}--\eqref{def:N} as
\begin{align}
\mathbf{L}_\ell^m({\bm x})&=\left(k_pj_{\ell}'(k_p\,r){\bf P}_\ell^m(\theta,\varphi)+\big(\ell(\ell+1)\big)^{1/2}\,\frac{j_{\ell}(k_p\,r)}{r}{\bf B}_\ell^m(\theta,\varphi)\right),\label{redef:L}\\
\mathbf{M}_\ell^m({\bm x})&=\big(\ell(\ell+1)\big)^{1/2}j_{\ell}(k_s\,r){\bf C}_\ell^m(\theta,\varphi),\label{redef:M}\\
\mathbf{N}_\ell^m({\bm x})&=\ell(\ell+1)\frac{j_{\ell}(k_s\,r)}{r}{\bf P}_\ell^m(\theta,\varphi)+\big(\ell(\ell+1)\big)^{1/2}\left(k_sj_{\ell}'(k_sr)\right.\nonumber\\
&+\left.\frac{j_{\ell}(k_sr)}{r}\right){\bf B}_\ell^m(\theta,\varphi),\label{redef:N}
\end{align}
with
\begin{align}
{\bf P}_\ell^m(\theta,\varphi)&:=\Omega^{-\frac{1}{2}}_{\ell m}e^{im\varphi}P_\ell^{|m|}(\cos\theta){\bf\hat{r}},\label{def:Pl}\\
{\bf B}_\ell^m(\theta,\varphi)&:=\frac{\Omega^{-\frac{1}{2}}_{\ell m}e^{im\varphi}}{[\ell(\ell+1)]^{\frac{1}{2}}}\left(-\sin\theta 
(P_\ell^{|m|})'(\cos\theta){\hat{\bm\theta}}+\frac{im}{\sin\theta}P_\ell^{|m|}(\cos\theta) {\hat{\bm\varphi}}\right),\label{def:Bl}
\end{align}
and
\begin{align}\label{def:Cl}
{\bf C}_\ell^m(\theta,\varphi)&
:=\frac{\Omega^{-\frac{1}{2}}_{\ell m}e^{im\varphi}}{[\ell(\ell+1)]^{\frac{1}{2}}}\left(\frac{im}{\sin\theta}P_\ell^{|m|}(\cos\theta) {\hat{\bm\theta}} +\sin\theta(P_\ell^{|m|})'(\cos\theta){\hat{\bm\varphi}}\right).
\end{align}
Vectors \eqref{def:Pl}--\eqref{def:Cl} are known as \emph{Hansen harmonics} (see \cite{Hansen} and \cite[pp.54]{Ben} for more details on this definition). They verify the following orthogonal properties that are required in the proof of Lemma \ref{lema_auxiliaresfericosLMN}.
\begin{lemma}\label{lemma:ort_sphericalgrad}Let $\ell,\ell',m,m'\in\mathbb{Z}$ such that $\ell,\ell'\geq 0$, $m\in[-\ell, \ell]$ and $m'\in[-\ell', \ell']$. For the functions ${\bf P}_\ell^m$, ${\bf B}_\ell^m$ and ${\bf C}_\ell^m$, defined by \eqref{def:Pl}, \eqref{def:Bl} and \eqref{def:Cl} respectively, we have that
	\begin{align}
	\langle {\nabla_S{\bf P}_\ell^m,\nabla_S{\bf P}_{\ell'}^{m'}} \rangle_{L^2(\mathbb{S}^2)}&=\big(\ell(\ell+1)+2\big)\delta_{mm'}\,\delta_{\ell\ell'},\label{ort_grad-norma1}\\
	\langle {\nabla_S{\bf B}_\ell^m,\nabla_S{\bf B}_{\ell'}^{m'}} \rangle_{L^2(\mathbb{S}^2)}
	&=\langle {\nabla_S{\bf C}_\ell^m,\nabla_S{\bf C}_{\ell'}^{m'}} \rangle_{L^2(\mathbb{S}^2)}=\ell(\ell+1)\,\delta_{mm'}\,\delta_{\ell\ell'},\label{ort_grad-norma2}\\
	\langle {\nabla_S{\bf P}_\ell^m,\nabla_S{\bf C}_{\ell'}^{m'}} \rangle_{L^2(\mathbb{S}^2)}&=	\langle {\nabla_S{\bf B}_\ell^m,\nabla_S{\bf C}_{\ell'}^{m'}} \rangle_{L^2(\mathbb{S}^2)}=0,\label{ort_grad-norma3}\\
	\langle {\nabla_S{\bf P}_\ell^m,\nabla_S{\bf B}_{\ell'}^{m'}} \rangle_{L^2(\mathbb{S}^2)}&=-2\big(\ell(\ell+1)\big)^{1/2}\,\delta_{mm'}\,\delta_{\ell\ell'},\label{ort_grad-norma4}
	\end{align}
	where $\nabla_S$ is the spherical gradient introduced in \eqref{NablaAngular3d} and $\delta$ denotes the Kronecker delta.
\end{lemma}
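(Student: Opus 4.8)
The plan is to reduce each of the five inner products to a single one–dimensional integral in the polar variable $\theta$ and to evaluate it by means of the orthogonality of the associated Legendre functions together with the associated Legendre differential equation. First I would compute, directly from the definition \eqref{NablaAngular3d}, the six dyadic components of $\nabla_S{\bf P}_\ell^m$, $\nabla_S{\bf B}_\ell^m$ and $\nabla_S{\bf C}_\ell^m$ in the moving frame $\{\widehat{\mathbf r},\widehat{\bm\theta},\widehat{\bm\varphi}\}$. Since all three Hansen harmonics carry the azimuthal factor $e^{im\varphi}$, and $\nabla_S$ only replaces $\partial_\varphi$ by multiplication by $im$ while every other operation in \eqref{NablaAngular3d} (the $\partial_\theta$, the factors $u^r,u^\theta,u^\varphi$, $\cot\theta$ and $1/\sin\theta$) preserves that factor, each component of $\nabla_S{\bf P}_\ell^m$, and likewise of $\nabla_S{\bf B}_\ell^m$ and $\nabla_S{\bf C}_\ell^m$, has the form $(\text{function of }\theta)\,e^{im\varphi}$. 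Hence the double inner product $\nabla_S{\bf P}_\ell^m:\overline{\nabla_S{\bf P}_{\ell'}^{m'}}$ contains the factor $e^{i(m-m')\varphi}$, whose integral over $\varphi\in(0,2\pi)$ gives $2\pi\,\delta_{mm'}$. This produces the Kronecker delta $\delta_{mm'}$ in all five identities and leaves a single integral over $\theta\in(0,\pi)$ with weight $\sin\theta$.

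Second, fixing $m=m'$ I would carry out the remaining $\theta$–integrals. Writing $P:=P_\ell^{|m|}(\cos\theta)$, these involve $|P'|^2\sin^2\theta$, $m^2|P|^2/\sin^2\theta$, products $PP'$ and the constant $\Omega_{\ell m}$ of \eqref{normalizing_constant}. The factor $\delta_{\ell\ell'}$ and the eigenvalue $\ell(\ell+1)$ come out after one integration by parts in $\theta$ combined with the associated Legendre equation
$$
\frac{1}{\sin\theta}\frac{d}{d\theta}\!\left(\sin\theta\,\frac{d}{d\theta}P_\ell^{|m|}(\cos\theta)\right)-\frac{m^2}{\sin^2\theta}\,P_\ell^{|m|}(\cos\theta)=-\ell(\ell+1)\,P_\ell^{|m|}(\cos\theta),
$$
which reduces everything to the basic orthogonality relation $\int_{-1}^{1}P_\ell^{|m|}(x)P_{\ell'}^{|m|}(x)\,dx=\frac{2}{2\ell+1}\frac{(\ell+|m|)!}{(\ell-|m|)!}\,\delta_{\ell\ell'}$; the normalisation \eqref{normalizing_constant} is exactly the one making $\int_{\mathbb{S}^2}|{\bf P}_\ell^m|^2\,d\sigma=1$, so this integral contributes a clean factor $1$. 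As an illustration, for ${\bf P}_\ell^m=\Omega_{\ell m}^{-1/2}e^{im\varphi}P\,\widehat{\mathbf r}$ the tensor $\nabla_S{\bf P}_\ell^m$ has, besides the terms $f'\,\widehat{\bm\theta}\otimes\widehat{\mathbf r}$ and $(imf/\sin\theta)\,\widehat{\bm\varphi}\otimes\widehat{\mathbf r}$ (with $f=\Omega_{\ell m}^{-1/2}P$), two extra diagonal terms $f\,\widehat{\bm\theta}\otimes\widehat{\bm\theta}$ and $f\,\widehat{\bm\varphi}\otimes\widehat{\bm\varphi}$ forced by the $u^r$ contributions in \eqref{NablaAngular3d}: the first two integrate to $\ell(\ell+1)$ and the last two to the extra $+2$ appearing in \eqref{ort_grad-norma1}.

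Third, for the vanishing identities \eqref{ort_grad-norma3} I would exploit the structural relation ${\bf C}_\ell^m=-\,\widehat{\mathbf r}\times{\bf B}_\ell^m$ (equivalently, ${\bf C}$ is obtained from ${\bf B}$ by interchanging the roles of $\widehat{\bm\theta}$ and $\widehat{\bm\varphi}$ together with a sign change). This makes the $\theta$–integrands that pair $\nabla_S{\bf C}$ against $\nabla_S{\bf P}$ or $\nabla_S{\bf B}$ into antisymmetric combinations of $PP'$ and of the factor $m$ that integrate to zero, and at the same time it explains the equality $\langle\nabla_S{\bf B},\nabla_S{\bf B}\rangle=\langle\nabla_S{\bf C},\nabla_S{\bf C}\rangle$ in \eqref{ort_grad-norma2}.

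I expect the main obstacle to be the bookkeeping in the cross term \eqref{ort_grad-norma4}: the value $-2\sqrt{\ell(\ell+1)}$ is not an ``energy'' integral but arises from the coupling of the components of $\nabla_S{\bf P}_\ell^m$ along $\widehat{\bm\theta}\otimes\widehat{\mathbf r}$ and $\widehat{\bm\varphi}\otimes\widehat{\mathbf r}$ with the corresponding components of $\nabla_S{\bf B}_\ell^m$, which for ${\bf B}$ (where $u^r=0$) are generated purely by the frame variation, i.e.\ by the $-u^\theta$ and $-u^\varphi$ terms of \eqref{NablaAngular3d}. Keeping track of these frame–induced contributions, and checking that the singular weights $1/\sin^2\theta$ produced along the way recombine into the Legendre operator before integration, is the delicate computational point; once this is organised, every identity follows from the single orthogonality integral above.
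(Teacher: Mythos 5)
Your strategy is correct and, at its core, it is the same as the paper's: compute the frame components of $\nabla_S{\bf P}_\ell^m$, $\nabla_S{\bf B}_\ell^m$, $\nabla_S{\bf C}_\ell^m$ from \eqref{NablaAngular3d}, obtain $\delta_{mm'}$ from the $\varphi$-integration of $e^{i(m-m')\varphi}$, substitute $x=\cos\theta$, and conclude via integration by parts, the associated Legendre equation and the orthogonality relation \eqref{ort:Pl}. The one place where you genuinely depart from the paper is the identity ${\bf C}_\ell^m=-\widehat{\mathbf r}\times{\bf B}_\ell^m$, which is indeed correct with the paper's sign conventions: the paper computes $\nabla_S{\bf C}_\ell^m$ from scratch and establishes both the ${\bf B}$--${\bf C}$ equality in \eqref{ort_grad-norma2} and the cancellations behind \eqref{ort_grad-norma3} through a second round of integrations by parts, while in your approach the rotation sends each entry of $\nabla_S{\bf B}_\ell^m$ bijectively to $\pm$ an entry of $\nabla_S{\bf C}_\ell^m$, hence preserves the double inner product and yields the equality for free; this is a real economy over the paper's route.

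Two caveats on execution. First, your accounting of \eqref{ort_grad-norma4} is only half the story: the $\widehat{\bm{\theta}}\otimes\widehat{\mathbf r}$, $\widehat{\bm{\varphi}}\otimes\widehat{\mathbf r}$ couplings you single out contribute $-\int_{\mathbb{S}^2}\nabla_{\mathbb{S}^2}Y_\ell^m\cdot\overline{{\bf B}_{\ell'}^{m'}}\,d\sigma=-\big(\ell(\ell+1)\big)^{1/2}\delta_{\ell\ell'}\delta_{mm'}$, and the other $-\big(\ell(\ell+1)\big)^{1/2}$ comes from the diagonal slots $Y_\ell^m\,\big(\widehat{\bm{\theta}}\otimes\widehat{\bm{\theta}}+\widehat{\bm{\varphi}}\otimes\widehat{\bm{\varphi}}\big)$ of $\nabla_S{\bf P}_\ell^m$ pairing with the surface divergence of ${\bf B}_{\ell'}^{m'}$, which equals $-\big(\ell'(\ell'+1)\big)^{1/2}Y_{\ell'}^{m'}$; a computation that tracked only the couplings you mention would come out short by a factor of $2$. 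Second, for $|m|=1$ your plan must be run as a limiting argument, as the paper does: since $(P_\ell^1)''(x)\sim x^2P_\ell'(x)(1-x^2)^{-3/2}$ near $x=\pm1$, several of the individual $\theta$-integrals produced by expanding the double inner product (for instance the one containing $(1-x^2)^2(P_\ell^1)''(x)(P_{\ell'}^1)''(x)$) are divergent on their own, so one must integrate by parts on $[a,b]\subset(-1,1)$ and verify that the boundary terms cancel before letting $a\to-1$ and $b\to1$; your remark about the singular weights recombining points at this issue but does not by itself resolve it.
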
 

\emph{Proof of Lemma \ref{lema_auxiliaresfericosLMN}.}
Using the expressions for the Navier eigenvectors \eqref{redef:L}--\eqref{redef:N}, the proof can be derived directly from Lemma \ref{lemma:ort_sphericalgrad} together with the relations for the Bessel functions introduced in \eqref{jDerivada} and \eqref{jyr}.
\hfill$\Box$
\medskip

We give the proof of Lemma \ref{lemma:ort_sphericalgrad} that it is quite tecnical.
\medskip
\noindent

\emph{Proof of Lemma \ref{lemma:ort_sphericalgrad}.}
We will prove the identities one by one and for $m\geq 0$. The case $m<0$ is analogous.

\noindent
\underline{Proof of \eqref{ort_grad-norma1}}. 
From the definitions of ${\bf P}_\ell^m$ and $\nabla_S$ given in \eqref{def:Pl} and \eqref{NablaAngular3d} respectively, we obtain that
\begin{align}
\nabla_S{\bf P}_\ell^m(\theta,\varphi)= 
&
\Omega^{-\frac{1}{2}}_{\ell m}e^{im\varphi}
\left[
-\sin\theta (P_\ell^{m})'(\cos\theta){\hat {\bm \theta}}\otimes{\bf \hat r}+P_\ell^{m}(\cos\theta){\hat {\bm \theta}}\otimes{\hat {\bm\theta}}
\right. \nonumber
\\
&
\left.+\frac{im}{\sin\theta}P_\ell^{m}(\cos\theta){\hat{\bm\varphi}}\otimes{\bf \hat r}+P_\ell^{m}(\cos\theta){\hat{\bm\varphi}}\otimes{\hat{\bm\varphi}}
\right].
\label{def:gradP}
\end{align}
From here, we parameterize the sphere and make the change of variable $\cos\theta=x$, to get
\begin{align}
\langle {\nabla_S{\bf P}_\ell^m,\nabla_S{\bf P}_{\ell'}^{m'}} \rangle_{L^2(\mathbb{S}^2)}=
&
\int_0^\pi\left(\int_0^{2\pi}\nabla_S{\bf P}_\ell^m:\overline{\nabla_S{\bf P}_{\ell'}^{m'}}\,d\varphi\right)\sin\theta\, d\theta 
\nonumber
\\
=&
\,\Omega\,\delta_{mm'}
\left[
\int_{-1}^1(1-x^2)(P_{\ell }^{m})'(x)(P_{\ell' }^{m})'(x)\,dx
\right.
\nonumber
\\
&+
\left.
\int_{-1}^1
\left(2+\frac{m^2}{(1-x^2)}\right)P_{\ell }^{m}(x)P_{\ell' }^{m}(x)\,dx
\right]
\label{gradP_gradP}
\end{align}
where 
\begin{equation}\label{Omega}
\Omega:=\Omega(\ell,\ell',m)
=2\pi\Omega^{-\frac{1}{2}}_{\ell m}\Omega^{-\frac{1}{2}}_{\ell' m},
\end{equation}
with $\Omega_{\ell m}$ defined in \eqref{normalizing_constant}. 

We notice that the integrals appearing in \eqref{gradP_gradP} are definite integrals for any $m$, since
\begin{equation}
P_\ell^m(x)=(-1)^m(1-x^2)^{m/2}\frac{d^{m}P_{\ell}}{dx^{m}}(x),\quad{m\geq 0},
\label{Legendre}
\end{equation}
\begin{equation}
(P_{\ell}^m)'(\pm 1)
<\infty,\quad m\neq 1,\label{Legendre_DERextremes}
\end{equation}
\begin{equation}
(P_\ell^1)'(x)
=\frac{xP_\ell'(x)}{(1-x^2)^{1/2}}+o((1-x^2)^{1/2}),\qquad x\rightarrow\pm1,\label{def:derivativeP1}
\end{equation}
where $P_{\ell}$ is the Legendre polynomial of degree $\ell$.

Since the associated Legendre polynomials are the canonical solutions of the general Legendre equation
\begin{equation}\label{eq:legendre}
\Big((1-x^{2})(P_{\ell }^{m})'\Big)'+\left[\ell (\ell +1)-{\frac {m^{2}}{1-x^{2}}}\right]P_{\ell }^{m}=0,
\end{equation}
multiplying \eqref{eq:legendre} by $P_{\ell'}^m$, we obtain
\begin{equation*}
\Big((1-x^{2})(P_{\ell }^{m})'P_{\ell' }^{m}\Big)'+\left[\ell (\ell +1)-{\frac {m^{2}}{1-x^{2}}}\right]P_{\ell }^{m}P_{\ell' }^{m}=(1-x^2)(P_{\ell }^{m})'(P_{\ell' }^{m})'.
\end{equation*}
Estimate \eqref{ort_grad-norma1} follows from here using this identity in \eqref{gradP_gradP} and the orthogonality property of the associated Legendre functions, that is,
\begin{equation}
\int_{-1}^1P_{\ell}^mP_{\ell'}^{m}\,dx=\frac{\Omega_{\ell m}}{2\pi}\delta_{\ell\ell'},\quad 0\leq m\leq\ell.\label{ort:Pl}
\end{equation}
Notice that in case $|m|=1$ we have used the fact that
\begin{equation}
P_{\ell}^m(1)=P_{\ell}^m(-1)=0,\quad m>0.\label{Legendre_extremes}
\end{equation}

\noindent
\underline{Proof of \eqref{ort_grad-norma2}}. Arguing as before, we compute
\begin{align}
\nabla_S{\bf B}_\ell^m(\theta,\varphi)=
&\frac{\Omega^{-\frac{1}{2}}_{\ell m}e^{im\varphi}}{[\ell(\ell+1)]^{\frac{1}{2}}}
\left[\sin\theta (P_\ell^m)'(\cos\theta){\hat {\bm \theta}}\otimes{\bf \hat r}
\right.\nonumber\\
&+\left(\sin^2\theta(P_\ell^m)''(\cos\theta)-\cos\theta (P_\ell^m)'(\cos\theta) \right){\hat {\bm \theta}}\otimes{\hat {\bm\theta}}\nonumber\\
&-im\left((P_\ell^m)'(\cos\theta)+\frac{\cos\theta}{\sin^2\theta}P_\ell^m(\cos\theta) \right){\hat {\bm \theta}}\otimes{\hat{\bm\varphi}}
\nonumber\\
&-\frac{im}{\sin\theta}P_\ell^m(\cos\theta){\hat{\bm\varphi}}\otimes{\bf \hat r}\nonumber\\
&-im\left((P_\ell^m)'(\cos\theta)+\frac{\cos\theta}{\sin^2\theta}P_\ell^m(\cos\theta) \right){\hat{\bm\varphi}}\otimes{\hat {\bm\theta}}\nonumber\\
&-\left.\left(\frac{m^2}{\sin^2\theta}P_\ell^m(\cos\theta)+\cos\theta(P_\ell^m)'(\cos\theta)\right){\hat{\bm\varphi}}\otimes{\hat{\bm\varphi}}
\right],
\label{def:gradB}
\end{align}
and
\begin{align}\label{def:gradC}
\nabla_S{\bf C}_\ell^m(\theta,\varphi)=
&\frac{\Omega^{-\frac{1}{2}}_{\ell m}e^{im\varphi}}{[\ell(\ell+1)]^{\frac{1}{2}}}
\left[-\frac{im}{\sin\theta} P_\ell^m(\cos\theta){\hat {\bm \theta}}\otimes{\bf \hat r}
\right.\nonumber
\\
&-im\left((P_\ell^m)'(\cos\theta)+\frac{\cos\theta}{\sin^2\theta} P_\ell^m(\cos\theta) \right){\hat {\bm \theta}}\otimes{\hat {\bm\theta}}\nonumber\\
&+\left(\cos\theta(P_\ell^m)'(\cos\theta)-\sin^2\theta (P_\ell^m)''(\cos\theta) \right){\hat {\bm \theta}}\otimes{\hat{\bm\varphi}}\nonumber\\
&-\sin\theta (P_\ell^m)'(\cos\theta){\hat{\bm\varphi}}\otimes{\bf \hat r}
\nonumber
\\
&-\left(\frac{m^2}{\sin^2\theta}P_\ell^m(\cos\theta)+\cos\theta (P_\ell^m)'(\cos\theta) \right){\hat{\bm\varphi}}\otimes{\hat{\bm \theta}}\nonumber\\
&+\left.im\left((P_\ell^m)'(\cos\theta)+\frac{\cos\theta}{\sin^2\theta}P_\ell^m(\cos\theta)\right){\hat{\bm\varphi}}\otimes{\hat{\bm\varphi}}\right],
\end{align}
and thus
\begin{align}
T
&:=
\langle {\nabla_S{\bf B}_\ell^m,\nabla_S{\bf B}_{\ell'}^{m'}} \rangle_{L^2(\mathbb{S}^2)}
=
\langle {\nabla_S{\bf C}_\ell^m,\nabla_S{\bf C}_{\ell'}^{m'}} \rangle_{L^2(\mathbb{S}^2)}=
\nonumber
\\
&=\Omega\,\delta_{mm'}[\ell(\ell+1)]^{-\frac{1}{2}}[\ell'(\ell'+1)]^{-\frac{1}{2}}(I_1+I_2+I_3+I_4+I_5),
\label{gradB_gradB}
\end{align}
with $\Omega$ given in \eqref{Omega},
\begin{eqnarray}
I_1&=&
\int_{-1}^1(1-x^2)^2\,(P_{\ell}^m)''(x)(P_{\ell'}^m)''(x)\,dx,\nonumber
\\
I_2&=&
\int_{-1}^1(1-x^2)x\,\Big((P_{\ell}^m)'(P_{\ell'}^m)'\Big)'(x)\,dx,\nonumber
\\
I_3&=&
\int_{-1}^1\frac{3m^2x}{(1-x^2)}\,\Big(P_{\ell}^mP_{\ell'}^m\Big)'(x) \,dx,\nonumber
\\
\label{I4_lemaSphericalB}
I_4&=& \int_{-1}^1(1+x^2+2m^2)(P_{\ell}^m)'(x)(P_{\ell'}^m)'(x)\,dx,
\\
I_5&=& \int_{-1}^1\frac{(m^2+m^2x^2+m^4)}{(1-x^2)^2}P_{\ell}^m(x)P_{\ell'}^m(x)\,dx.\nonumber
\end{eqnarray}
We notice that while the previous integrals are definite integrals if $m\neq 1,$ they improper integrals if $m= 1.$ This is due to \eqref{Legendre}, \eqref{Legendre_DERextremes}, \eqref{def:derivativeP1}, 
\begin{equation}
(P_{\ell}^m)''(\pm 1)
<\infty,\quad m\neq 1,\label{Legendre_DER2extremes}
\end{equation}
\begin{equation}
(P_\ell^1)''(x)
=\frac{x^2P_\ell'(x)}{(1-x^2)^{3/2}}+o((1-x^2)^{-1/2}.\label{def:derivative2P1}
\end{equation}
First of all, we will prove the result for the case $m\neq 1$.

Integrating by parts, we have that
\begin{align}
I_2
&=\int_{-1}^1(1-3x^2)(P_{\ell}^m)'(x)(P_{\ell'}^m)'(x)\,dx,
\label{partes_I2}
\\
I_3+I_5
&=\int_{-1}^1\frac{2m^2x}{(1-x^2)}\,\Big(P_{\ell}^mP_{\ell'}^m\Big)'(x) \,dx
+
\int_{-1}^1\frac{m^4}{(1-x^2)^2}P_{\ell}^m(x)P_{\ell'}^m(x)\,dx.
\label{partes_I3}
\end{align}
Notice that the border terms appearing in the integration by parts of the identity \eqref{partes_I3} are equal zero due to \eqref{Legendre_extremes} (in the case $m=0$, $I_3+I_5=0$).

Integrating again by parts and using equation \eqref{eq:legendre} but rewritten as
\begin{equation}\label{eq:legendre2}
(1-x^{2})(P_{\ell }^{m})''-2x(P_{\ell }^{m})'+\left[\ell (\ell +1)-{\frac {m^{2}}{1-x^{2}}}\right]P_{\ell }^{m}=0,
\end{equation}
we obtain that 
\begin{align*}
I_1=
&
\int_{-1}^1\left[\left(\ell(\ell+1)-2\right)(1-x^2)-m^2\right](P_{\ell}^m)'(x)(P_{\ell'}^m)'(x)\,dx
\nonumber
\\
&-\int_{-1}^1\frac{m^2x}{(1-x^2)}\,\Big(P_{\ell}^mP_{\ell'}^m\Big)'(x) \,dx.
\end{align*}
By symmetry we can change the role of $\ell$ and $\ell'$ to get
\begin{align}
I_1=
&
\int_{-1}^1\left[\left(\frac{\ell(\ell+1)+\ell'(\ell'+1)}{2}-2\right)(1-x^2)-m^2\right](P_{\ell}^m)'(x)(P_{\ell'}^m)'(x)\,dx
\nonumber
\\
&-\int_{-1}^1\frac{m^2x}{(1-x^2)}\,\Big(P_{\ell}^mP_{\ell'}^m\Big)'(x) \,dx.
\label{partes_I1}
\end{align}
Using \eqref{I4_lemaSphericalB}, \eqref{partes_I2},\eqref{partes_I3} and \eqref{partes_I1} in \eqref{gradB_gradB} we have that
\begin{align}
I&:=I_1+I_2+I_3+I_4+I_5
\nonumber
\\
&\ =
\int_{-1}^1\left(\frac{\ell(\ell+1)+\ell'(\ell'+1)}{2}(1-x^2)+m^2\right)(P_{\ell}^m)'(x)(P_{\ell'}^m)'(x)\,dx
\nonumber
\\
&
\quad 
+
\int_{-1}^1\frac{m^2x}{(1-x^2)}\,\Big(P_{\ell}^mP_{\ell'}^m\Big)'(x) \,dx
+
\int_{-1}^1\frac{m^4}{(1-x^2)^2}P_{\ell}^m(x)P_{\ell'}^m(x)\,dx.
\label{T}
\end{align}
Integrating by parts, using \eqref{eq:legendre2} and the symmetry, we obtain that
\begin{align}
\int_{-1}^1m^2(P_{\ell}^m)'(x)(P_{\ell'}^m)'(x)\,dx=
&
\int_{-1}^1\frac{m^2[\ell(\ell+1)+\ell'(\ell'+1)]}{2(1-x^2)}P_{\ell}^m(x)P_{\ell'}^m(x)\,dx
\nonumber
\\
&-\int_{-1}^1\frac{m^2x}{(1-x^2)}\,\Big(P_{\ell}^mP_{\ell'}^m\Big)'(x) \,dx
\nonumber
\\
&-\int_{-1}^1\frac{m^4}{(1-x^2)^2}P_{\ell}^m(x)P_{\ell'}^m(x)\,dx.
\label{horror}
\end{align}
As before, the border terms appearing in the integration by parts of the identity \eqref{horror} equal zero due to \eqref{Legendre_extremes} (in the case $m=0$, the integral on the left hand side of \eqref{horror} is zero).

On the other hand, using \eqref{eq:legendre}, the symmetry and \eqref{ort:Pl}, we get
\begin{align}
\int_{-1}^1(1-x^2)(P_{\ell}^m)'(x)(P_{\ell'}^m)'(x)\,dx=
&
\frac{[\ell(\ell+1)+\ell'(\ell'+1)]\Omega_{\ell m}}{4\pi}
\delta_{\ell\ell'}
\nonumber
\\
&-\int_{-1}^1\frac{m^2}{1-x^2}P_{\ell}^m(x)P_{\ell'}^m(x)\,dx.
\label{horror2}
\end{align}
Estimate \eqref{ort_grad-norma2} follows from \eqref{gradB_gradB}, \eqref{T}, \eqref{horror} and \eqref{horror2}.

In case $m= 1$, we are dealing with improper integrals, but the strategy used for the case $m\neq 1$ is still valid if we replace the limits of integration $-1$ and $1$ by $a$ and $b$, and then make $a$ tends to $-1$ and $b$ tends to $1$. In this procedure, we get border terms depending on $a$ and $b$ which cancel before taking the limits, and therefore, we get the same result as before. We leave the details for the reader.

\medskip
\noindent
\underline{Proof of \eqref{ort_grad-norma3}}. From \eqref{def:gradP} and \eqref{def:gradC}, arguing as before and using \eqref{Legendre_extremes}, we obtain that
\begin{align*}
\langle {\nabla_S{\bf P}_\ell^m,\nabla_S{\bf C}_{\ell'}^{m'}} \rangle_{L^2(\mathbb{S}^2)}=
-im\Omega[\ell'(\ell'+1)]^{-\frac{1}{2}}\delta_{mm'}\int_{-1}^{1}
\Big(P_{\ell}^mP_{\ell'}^m\Big)'(x)\,dx=0,
\end{align*}
where $\Omega$ is given in \eqref{Omega}.

For the second inner product in \eqref{ort_grad-norma3}, we use \eqref{def:gradB} and \eqref{def:gradC} to write
\begin{equation}
\langle {\nabla_S{\bf B}_\ell^m,\nabla_S{\bf C}_{\ell'}^{m'}} \rangle_{L^2(\mathbb{S}^2)}=
im\Omega\,\delta_{mm'}[\ell(\ell+1)]^{-\frac{1}{2}}[\ell'(\ell'+1)]^{-\frac{1}{2}}(S_1+S_2+S_3+S_4),
\label{gradB_gradC}
\end{equation}
with $\Omega$ given in \eqref{Omega},
\begin{eqnarray}
S_1&=&
\int_{-1}^1\left(1+\frac{m^2}{1-x^2}\right)\Big(P_{\ell}^mP_{\ell'}^m\Big)'(x) \,dx,\nonumber
\\
S_2&=&
\int_{-1}^1(1-x^2)\,\Big((P_{\ell}^m)'(P_{\ell'}^m)'\Big)'(x)\,dx,\nonumber
\\
S_3&=&
\int_{-1}^1x\,\Big[(P_{\ell}^m)''P_{\ell'}^m+P_{\ell}^m(P_{\ell'}^m)''\Big](x)\,dx,\nonumber
\\
\label{S4}
S_4&=&
\int_{-1}^1\frac{2m^2}{(1-x^2)^2}P_{\ell}^m(x)P_{\ell'}^m(x)\,dx.
\end{eqnarray}

We notice that the previous integrals are definite integrals if $m\neq 1,$ but are improper integrals if $m= 1.$ This is due to \eqref{Legendre}, \eqref{Legendre_DERextremes}, \eqref{def:derivativeP1}, \eqref{Legendre_DERextremes}, \eqref{Legendre_DER2extremes} and \eqref{def:derivative2P1}. 
First of all, we will prove the result for $m\neq 1$ and $0$. 
The case $m=0$ is trivial.

Using \eqref{Legendre_extremes}, \eqref{Legendre}, \eqref{Legendre_DERextremes}, \eqref{Legendre_extremes} and integrating by parts we obtain that
\begin{align}
\label{partes_S1}
S_1&=-\int_{-1}^1\frac{2m^2}{(1-x^2)^2}P_{\ell}^m(x)P_{\ell'}^m(x)\,dx,
\\
\label{partes_S2_S3}
S_2&=-\int_{-1}^12x\,(P_{\ell}^m)'(x)(P_{\ell'}^m)'(x)\,dx=-S_3.
\end{align}
The result follows form \eqref{gradB_gradC}, \eqref{S4}, \eqref{partes_S1} and \eqref{partes_S2_S3}.

The case $m= 1$, as it happens in the proof of \eqref{ort_grad-norma2}, some border terms appear when integration by parts is used, but they tend to zero when taking the limits. Actually, using \eqref{Legendre} and \eqref{def:derivativeP1}, the reader can check, after a few computations, that the border terms obtained are the following:
$$
(1-b^2)P_\ell'(b)-(1-a^2)P_\ell'(a)+o((1-b^2))+o((1-a^2)).
$$  

\medskip
\noindent
\underline{Proof of \eqref{ort_grad-norma4}}. From \eqref{def:gradP} and \eqref{def:gradB}, we obtain that
\begin{equation*}
\langle {\nabla_S{\bf P}_\ell^m,\nabla_S{\bf B}_{\ell'}^{m'}} \rangle_{L^2(\mathbb{S}^2)}
=
\Omega\,\delta_{mm'}[\ell'(\ell'+1)]^{-\frac{1}{2}}(F_1+F_2+F_3-S_4),
\end{equation*}
with $\Omega$ and  $S_4$ given in \eqref{Omega} and \eqref{S4} respectively,
\begin{eqnarray*}
	F_1&=&
	-\int_{-1}^1(1-x^2)\,(P_{\ell}^m)'(x)(P_{\ell'}^m)'(x)\,dx,
	\\
	F_2&=&
	\int_{-1}^1(1-x^2)\,P_{\ell}^m(x)(P_{\ell'}^m)''(x)\,dx,
	\\
	F_3&=&
	\int_{-1}^12x\,P_{\ell}^m(x)(P_{\ell'}^m)'(x)\,dx.
\end{eqnarray*}
The result follows integrating by parts $F_2$, and using \eqref{Legendre_extremes}, \eqref{Legendre_DERextremes}, \eqref{def:derivativeP1} and \eqref{ort:Pl}.
\hfill$\Box$

\medskip

We finish the section with the proof of \eqref{a} in Lemma \ref{integralesBessel1}.

\medskip

\emph{Proof of Lemma \ref{integralesBessel1}.} To prove \eqref{a}, we only detail the case $0<a<b$, since the case $0<b<a$ is similar. First of all, we use the relationship between the spherical Bessel functions and the Bessel functions given in  \eqref{Jj} to write
\begin{equation}
\label{jl}
I=\int_0^\infty j_\ell(ar) j_{\ell}(br)\,r^2\,\frac{dr}{\langle r\rangle^3}
\sim
\int_0^\infty J_{\ell+\frac{1}{2}}(ar)J_{\ell+\frac{1}{2}}(br) \,r\,\frac{dr}{\langle r\rangle^3}.
\end{equation}
We split the last integral in \eqref{jl} into two, to write
\begin{equation}
\label{divido}I\sim I_{1}+I_{2},
\end{equation}
with
\begin{equation*}
I_1=
\int_{0}^{1}J_{\ell+\frac{1}{2}}(ar) J_{\ell+\frac{1}{2}}(br)  \,r\frac{dr}{\langle r\rangle^3},
\qquad
I_2=
\int_{1}^{\infty}J_{\ell+\frac{1}{2}}(ar) J_{\ell+\frac{1}{2}}(br)  \,r\frac{dr}{\langle r\rangle^3}.
\end{equation*}

Using \eqref{CotaBessel}, we have that
\begin{align}
\label{I1_ab}
I_1\sim \int_{0}^{1}J_{\ell+\frac{1}{2}}(ar) J_{\ell+\frac{1}{2}}(br) r \,dr\sim\frac{1}{\left(\ell+\frac{1}{2}\right)^{2(\ell+\frac{3}{2})}}\frac{e^{2\ell}}{2^{2\ell}}a^{\ell+\frac{1}{2}}\,b^{\ell+\frac{1}{2}}.
\end{align}
On the other hand, 
\begin{equation}
\label{I2ab}
I_2\sim\int_{1}^{\infty}J_{\ell+\frac{1}{2}}(ar) J_{\ell+\frac{1}{2}}(br)  \,\frac{dr}{r^2}=I_3-I_4,
\end{equation}
with
\begin{equation*}
I_3=
\int_{0}^{\infty}J_{\ell+\frac{1}{2}}(ar) J_{\ell+\frac{1}{2}}(br)  \,\frac{dr}{r^2},
\qquad
I_4=
\int_{0}^{1}J_{\ell+\frac{1}{2}}(ar) J_{\ell+\frac{1}{2}}(br)  \,\frac{dr}{r^2}.
\end{equation*}

Arguing as we did to get \eqref{I1_ab}, we obtain that
\begin{equation}
\label{I4_ab}
|I_4|\sim  \int_{0}^{1}J_{\ell+\frac{1}{2}}(ar) J_{\ell+\frac{1}{2}}(br) \,dr\sim I_1
\end{equation}
In order to estimate $I_3$, we observe that it is a Weber-Schafheitlin type integral. In \cite{abramowitz} (see 11.4.34 on page 487) we can find the following identity for this type of integrals: whenever $0<a<b,$ $\Re \lambda>-1$ and $\Re(\mu+\nu-\lambda+1)>0,$
\begin{equation*}
\label{Weber}
\int_0^\infty J_{\mu}(ar) J_{\nu}(br)\,\frac{dr}{r^{\lambda}}
=
\frac{_2F_1\left(\frac{\mu+\nu-\lambda+1}{2},\frac{\mu-\nu-\lambda+1}{2};\mu+1;\frac{a^2}{b^2}\right)a^{\mu}\Gamma\left(\frac{\mu+\nu-\lambda+1}{2}\right)}{2^{\lambda}b^{\mu-\lambda+1}\Gamma(\mu+1)\Gamma\left(\frac{\nu-\mu+\lambda+1}{2}\right)},
\end{equation*}
where $_2F_1$ is a Gauss hypergeometric series defined by (see 15.1.1 on page 556 of \cite{abramowitz})
\begin{equation*}
\label{hipergeometrica}
_2F_1(a,b;c;z)=\frac{\Gamma(c)}{\Gamma(a)\Gamma(b)}
\sum_{n=0}^{\infty}\frac{\Gamma(a+n)\Gamma(b+n)z^n}{\Gamma(c+n)n!},
\end{equation*}
which is absolutely convergent for $|z|\leq 1$ and $\Re (c-a-b)>0.$

From here, we obtain for any $\ell>0,$
\begin{equation*}
I_3
\sim
\frac{a^\ell}{b^\ell\Gamma\left(-\frac{1}{2}\right)}\, 
\sum_{k=0}^{\infty}\frac{\Gamma(\ell+k)\Gamma\left(-\frac{1}{2}+k\right)a^{2k}}
{\Gamma\left(\ell+\frac{3}{2}+k\right)k!b^{2k}}.
\end{equation*}
Using now the well known recurrence formula for the Gamma function
$$
\Gamma(z+1)=z\Gamma(z),
$$
and the duplication formula  (see 6.1.18 on page 256 of \cite{abramowitz})
$$
\Gamma(2z)=(2\pi)^{-\frac{1}{2}}2^{2z-\frac{1}{2}}\Gamma(z)\Gamma\left(z+\frac{1}{2}\right),
$$
we get
\begin{equation*}
I_3
\sim
\left(\frac{a}{b}\right)^\ell 2^{2\ell}\, 
\sum_{k=0}^{\infty}\frac{\left(\Gamma(\ell+k)\right)^22^{2k}\Gamma\left(-\frac{1}{2}+k\right)a^{2k}}
{(\ell+k)\Gamma\left(2\ell+2k\right)\Gamma\left(-\frac{1}{2}\right)k!b^{2k}}.
\end{equation*}
Using the identity (see 6.1.22 on page 256 of \cite{abramowitz})
$$
\frac{\Gamma(z+k)}{\Gamma(z)}=z(z+1)\ldots(z+k-1),
$$
we have that
\begin{equation*}
I_3
\sim
\left(\frac{a}{b}\right)^\ell 2^{2\ell}\, 
\sum_{k=0}^{\infty}\frac{\left(\Gamma(\ell+k)\right)^22^{2k}a^{2k}}
{(\ell+k)\Gamma\left(2\ell+2k\right)b^{2k}}.
\end{equation*}
Since $\Gamma(n)=(n-1)!$,  using the Stirling formula given by
\begin{equation}
\label{stirling}
n!\sim n^{n+\frac{1}{2}}e^{-n},\qquad n\in\mathbb{N},\,n\rightarrow\infty,
\end{equation}
we obtain that
\begin{equation*}
I_3
\sim
\left(\frac{a}{b}\right)^\ell \, 
\sum_{k=0}^{\infty}\frac{1}{(\ell+k)^{\frac{3}{2}}}\,\left(\frac{a}{b}\right)^{2k},
\end{equation*}
and therefore, since $0<a<b,$ we have that
\begin{equation}
\label{I3ab}
I_3
\sim
\left(\frac{a}{b}\right)^\ell \, 
\ell^{-\frac{3}{2}}.
\end{equation}
The result follows from \eqref{I1_ab}-\eqref{I3ab},  because we have that
\begin{equation*}
I\sim\left(\frac{a}{b}\right)^\ell \, 
\ell^{-\frac{3}{2}}\left(1+\left(\frac{b\,e}{a\,2\,\ell}\right)^{2\ell}\frac{1}{\ell^{\frac{3}{2}}}\right)
\end{equation*}
and the second term inside the parentheses is $0$ for $\ell$ large enough.
\hfill $\square$
\section{Reproducing kernel of $\mathcal{W}(\mathbb{R}^3)$ and $\mathcal{W}(\mathbb{R}^2)$} 
In this section we will construct the reproducing kernel for $\mathcal{W}(\mathbb{R}^3)$ as a subspace of the Hilbert space $\mathcal{H}(\mathbb{R}^3)$. 
The construction of the reproducing kernel for $\mathcal{W}(\mathbb{R}^2)$ is similar to the previous one but easier, so we just sketch it.

We start introducing the definition of a complex-vector-valued reproducing kernel Hilbert space. For a complete study about this topic, we refer the reader to \cite{vvRKHS}, \cite{vvRKHS2} and \cite{vvRKHS3}. 

\begin{definition}
	\label{vvRKHS}
	Let $H$ be a Hilbert space of complex-vector-valued functions 
	$\mathbf{u}\,:\,X\longrightarrow \mathbb{C}^d,$ $d\in\mathbb{N},$ 
	with inner product $\langle\cdot,\cdot\rangle_H.$ We say that $H$ is a reproducing kernel Hilbert space if there exists a complex-valued second-order tensor $\Gamma(x,y),$ with $(x,y)\in X\times X$ satisfying that
	\begin{itemize}
		\item [(i)]
		For all $x\in X$ and $\bm{z}\in \mathbb{C}^d,$ 
		\begin{equation*}
		\label{condicion1General}
		\Gamma(x,y)z\in H.
		\end{equation*}
		\item [(ii)]
		For all $\mathbf{u}\in H,$ $x\in X$ and $\bm{z}\in \mathbb{C}^d,$
		\begin{equation*}
		\label{condicion2general}
		\mathbf{u}(x)\cdot\overline{\bm{z}}
		=
		\langle \mathbf{u},\Gamma(x,\cdot)\bm{z}\rangle_H.
		\end{equation*}
	\end{itemize} 
	The tensor $\Gamma(x,y)$ is called the reproducing kernel of $H.$
\end{definition}
\begin{remark}
	\label{generaH}
	Notice that $H$ coincides with the closure of the span of the set
	$$\{\Gamma(x,\cdot)\bm{z}\,:\, x\in X,\, \bm{z}\in\mathbb{C}^d\}.$$
\end{remark}
\textit{Proof of Theorem \ref{NucleoReporductor3d}}.
From the identities \eqref{def:NNvectors} and \eqref{DesarrolloLame}, and Proposition \ref{ortoLMN},$\{\mathbfcal{L}_\ell^m$, $\mathbfcal{M}_\ell^m$,$\mathbfcal{N}_\ell^m\}_{\ell,m}$ is an almost orthonormal basis of $\mathcal{W}(\mathbb{R}^3)$. We can get an orthonormal basis by replacing $\mathbfcal{N}_\ell^m$ by $\widetilde{\mathbfcal{N}}_\ell^m$ with
\begin{equation}
\label{Ntilde}
\widetilde{\mathbfcal{N}}_\ell^m=\mathbfcal{N}_\ell^m-
\langle {\mathbfcal{N}_\ell^m,\mathbfcal{L}_{\ell}^{m}} \rangle_{\mathcal{H}}\,\mathbfcal{L}_{\ell}^{m}.
\end{equation}
Notice that from \eqref{LNdif},
\begin{equation}
\label{NormaNtilde}
\|\widetilde{\mathbfcal{N}}_\ell^m\|_{\mathcal{H}}^2=1-|\langle {\mathbfcal{N}_\ell^m,\mathbfcal{L}_{\ell}^{m}} \rangle_{\mathcal{H}}|^2\neq 0.
\end{equation}
Then, since $\{\mathbfcal{L}_\ell^m, \mathbfcal{M}_\ell^m,\widetilde{\mathbfcal{N}}_\ell^m/\|\widetilde{\mathbfcal{N}}_\ell^m\|_{\mathcal{H}}\}_{\ell,m}$ is an orthonormal basis of $\mathcal{W}(\mathbb{R}^3)$, the orthogonal projection $\mathcal{P}$
of $\mathcal{H}(\mathbb{R}^3)$ onto $\mathcal{W}(\mathbb{R}^3)$ is given for any $\mathbf{u}\in \mathcal{H}(\mathbb{R}^3)$ by
\begin{equation*}
\label{ProyeccionOrotogonal}
\displaystyle 
\mathcal{P}\mathbf{u}
=\sum_{\ell=0}^\infty\sum_{m=-\ell}^\ell 
\left(
\langle \mathbf{u},\mathbfcal{L}_{\ell}^{m} \rangle_{\mathcal{H}}\,\mathbfcal{L}_{\ell}^{m}
+
\langle \mathbf{u},\mathbfcal{M}_{\ell}^{m} \rangle_{\mathcal{H}}\,\mathbfcal{M}_{\ell}^{m}
+
\frac{\langle \mathbf{u},\widetilde{\mathbfcal{N}}_\ell^m \rangle_{\mathcal{H}}}{\|\widetilde{\mathbfcal{N}}_\ell^m\|_{\mathcal{H}}^2}\,\widetilde{\mathbfcal{N}}_\ell^m
\right),
\end{equation*}
with convergence in $\mathcal{W}(\mathbb{R}^3)$ and also pointwise. 

In particular, if $\mathbf{u}\in \mathcal{W}(\mathbb{R}^3)$, formally, for any $\bm{x}\in\mathbb{R}^3$ and $\bm{z}\in\mathbb{C}^3$ we can write
that
\begin{equation}
\label{condicion2}
\mathbf{u}(\bm{x})\cdot\overline{\bm{z}}
=
\mathcal{P}\mathbf{u}(\bm{x})\cdot\overline{\bm{z}}
=
\langle \mathbf{u},\Gamma(\bm{x},\cdot)\bm{z}\rangle_{\mathcal{H}},
\end{equation}
where
\begin{equation}
\label{Gammaxy}
\Gamma(\bm{x},\bm{y})\bm{z}
:=\sum_{\ell=0}^\infty\sum_{m=-\ell}^\ell 
\Gamma_{\ell}^{m}(\bm{x},\bm{y})\bm{z},
\end{equation}
with
$$
\Gamma_{\ell}^{m}(\bm{x},\bm{y})\bm{z}
=
\overline{\mathbfcal{L}_{\ell}^{m}(\bm{x})}\cdot\bm{z}\,
\mathbfcal{L}_{\ell}^{m}(\bm{y})
+
\overline{\mathbfcal{M}_{\ell}^{m}(\bm{x})}\cdot\bm{z}\,
\mathbfcal{M}_{\ell}^{m}(\bm{y})
+
\frac{\overline{\widetilde{\mathbfcal{N}}_{\ell}^{m}(\bm{x})}\cdot\bm{z}}{\|\widetilde{\mathbfcal{N}}_\ell^m\|_{\mathcal{H}}^2}\,\widetilde{\mathbfcal{N}}_\ell^m(\bm{y}).
$$
For convenience, we will write $\Gamma_{\ell}^{m}(\bm{x},\bm{y})$ as the following second order tensor
\begin{equation*}
\label{Gammalm}
\Gamma_{\ell}^{m}(\bm{x},\bm{y})
:=
\mathbfcal{L}_{\ell}^{m}(\bm{y})\otimes\overline{\mathbfcal{L}_{\ell}^{m}(\bm{x})}
+
\mathbfcal{M}_{\ell}^{m}(\bm{y})\otimes\overline{\mathbfcal{M}_{\ell}^{m}(\bm{x})}
+
\frac{\widetilde{\mathbfcal{N}}_{\ell}^{m}(\bm{y})\otimes\overline{\widetilde{\mathbfcal{N}}_{\ell}^{m}(\bm{x})}}{\|\widetilde{\mathbfcal{N}}_\ell^m\|_{\mathcal{H}}^2}.
\end{equation*}
In order to make \eqref{condicion2} rigorous we have to prove that
the series given in \eqref{Gammaxy} converges absolutely and uniformly on compact subsets of $\mathbb{R}^3\times\mathbb{R}^3.$ 

In order to do that, we use \eqref{Ntilde} and \eqref{NormaNtilde} to write
\begin{eqnarray}
\Gamma_{\ell}^{m}(\bm{x},\bm{y})&=&
\frac{1}{\|\widetilde{\mathbfcal{N}}_\ell^m\|_{\mathcal{H}}^2}
\left(
\mathbfcal{L}_{\ell}^{m}(\bm{y})\otimes\overline{\mathbfcal{L}_{\ell}^{m}(\bm{x})}
+
\mathbfcal{N}_{\ell}^{m}(\bm{y})\otimes\overline{\mathbfcal{N}_{\ell}^{m}(\bm{x})}
\right)
\nonumber
\\
&&
-
\frac{1}{\|\widetilde{\mathbfcal{N}}_\ell^m\|_{\mathcal{H}}^2}
\langle {\mathbfcal{N}_\ell^m,\mathbfcal{L}_{\ell}^{m}} \rangle_{\mathcal{H}}\,
\mathbfcal{L}_{\ell}^{m}(\bm{y})\otimes\overline{\mathbfcal{N}_{\ell}^{m}(\bm{x})}
\nonumber
\\
&&
-
\frac{1}{\|\widetilde{\mathbfcal{N}}_\ell^m\|_{\mathcal{H}}^2}
\overline{\langle {\mathbfcal{N}_\ell^m,\mathbfcal{L}_{\ell}^{m}}} \rangle_{\mathcal{H}}\,
\mathbfcal{N}_{\ell}^{m}(\bm{y})\otimes\overline{\mathbfcal{L}_{\ell}^{m}(\bm{x})}
\nonumber
\\
\label{nucleo}
&&
+
\mathbfcal{M}_{\ell}^{m}(\bm{y})\otimes\overline{\mathbfcal{M}_{\ell}^{m}(\bm{x})}
\end{eqnarray}

From \eqref{NormaNtilde}, \eqref{def:NNvectors}, \eqref{ort-norma-lmn-3} and \eqref{NormasUnitarios}, we have that
\begin{equation}
\label{cotas}
\|\widetilde{\mathbfcal{N}}_\ell^m\|_{\mathcal{H}}=O(1)
\qquad
\text{and}
\qquad
\langle {\mathbfcal{N}_\ell^m,\mathbfcal{L}_{\ell}^{m}} \rangle_{\mathcal{H}}=O(\ell^{-2}).
\end{equation}

From \eqref{nucleo}, using the triangle inequality, the Cauchy-Schwarz inequality and \eqref{cotas}, one can see that it is enough to prove that the series given in \eqref{series}  
converges absolutely and uniformly on compact subsets of $\mathbb{R}^3.$ This was done in Remark \ref{convergenciaSerieLame}.

To finish, we have to prove that for any $\bm{x}\in\mathbb{R}^3$ and $\bm{z}\in\mathbb{C}^3$ the series
\begin{equation*}
\label{Gamma}
\Gamma(\bm{x},\cdot)\bm{z}
=\sum_{\ell=0}^\infty\sum_{m=-\ell}^\ell 
\Gamma_{\ell}^{m}(\bm{x},\cdot)\bm{z}
\end{equation*}
converges in $\mathcal{W}(\mathbb{R}^3)$. 

In order to do this, it is enough to prove that
\begin{equation*}
\sum_{\ell=0}^\infty\sum_{m=-\ell}^\ell 
\|\Gamma_{\ell}^{m}(\bm{x},\cdot)\bm{z}\|_{\mathcal{H}}<\infty.
\end{equation*}

Actually, from the definition of $\Gamma_{\ell}^{m}$ given in \eqref{nucleo}, using the triangle inequality and \eqref{cotas}, one can see that it is enough to prove that the series
\begin{equation*}
\sum_{\ell=0}^\infty\sum_{m=-\ell}^\ell 
|\mathbfcal{L}_{\ell}^{m}(\bm{x})|,
\qquad
\sum_{\ell=0}^\infty\sum_{m=-\ell}^\ell
|\mathbfcal{M}_{\ell}^{m}(\bm{x})|,
\qquad
\sum_{\ell=0}^\infty\sum_{m=-\ell}^\ell
|\mathbfcal{N}_{\ell}^{m}(\bm{x})|
\end{equation*}
converge for any $\bm{x}\in\mathbb{R}^3$ fixed.

Moreover, arguing as in Remark \ref{ConvergenciaMadre}, it is enough to prove that
\begin{equation*}
\sum_{\ell=0}^\infty \ell\, \left|j_l(r)\right|<\infty.
\end{equation*}
This can be done using \eqref{Jj} and \eqref{CotaBessel}.
\hfill $\square$
\begin{remark}
	\label{DefNucleo3d}
	The reproducing kernel of $\mathcal{W}(\mathbb{R}^3)$ as a subspace of the Hilbert space $\mathcal{H}(\mathbb{R}^3)$ is the complex-valued second-order tensor $\Gamma(\bm{x},\bm{y})$ defined by \eqref{Gammaxy} and \eqref{nucleo}.
	
	Notice that for any $\bm{x},\bm{y}\in\mathbb{R}^3,\,\bm{z}\in\mathbb{C}^3,$ we have that
	$$
	\Gamma(\bm{y},\bm{x})=\overline{\Gamma(\bm{x},\bm{y})}\qquad \text{and}\qquad
	\bm{z}\cdot\Gamma(\bm{x},\bm{x})\overline{\bm{z}}\ge 0.
	$$
	Moreover, for any $\bm{z}_1,\bm{z}_2\in\mathbb{C}^3,$ we have that
	\begin{equation*}
	\Gamma(\bm{x},\bm{y})\bm{z}_1\cdot\overline{\bm{z}_2}
	=
	\langle \Gamma(\bm{x},\cdot)\bm{z}_1,\Gamma(\bm{y},\cdot)\bm{z}_2\rangle
	=
	\overline{\langle\Gamma(\bm{y},\cdot)\bm{z}_2, \Gamma(\bm{x},\cdot)\bm{z}_1\rangle}
	=
	\bm{z}_1\cdot\overline{\Gamma(\bm{y},\bm{x})\bm{z}_2}.
	\end{equation*}
\end{remark}
We finish this section sketching the proof of Theorem \ref{NucleoReporductor3d}.

Before doing that, following \cite[p.~5]{2d}, for $n\in \mathbb{Z},$ we introduce the functions
\begin{equation*}
\label{F}
F_{n,k}(\bm{x})= F_{n,k}(r\cos\theta,r\sin\theta):=J_n(rk)\,e^{in\theta},\qquad r>0,\ \theta\in[0,2\pi)
\end{equation*}
and
\begin{equation*}
\label{base2d}
\mathbf{e}_n:=\frac{\nabla F_{n,k_p}}{\|\nabla F_{n,k_p}\|_{\mathcal{H}}},\qquad
\mathbf{f}_n:=\frac{\nabla^\perp F_{n,k_s}}{\|\nabla^\perp F_{n,k_s}\|_{\mathcal{H}}}.
\end{equation*} 
We can write any entire solution of the homogeneous spectral Navier equation in dimension two $\mathbf{u}$ as
\begin{equation}
\label{DesarrolloLame2d}
\displaystyle \mathbf{u}(\bm{x})=\sum_{n\in\mathbb{Z}} 
\left(
a_n\mathbf{e}_n({\bm x})
+
b_n\mathbf{f}_n({\bm x})
\right),
\end{equation}
for certain constants $a_n,b_n.$

As in the three-dimensional case, one can prove that the series given in \eqref{DesarrolloLame2d} converges absolutely and uniformly on compact subsets of $\mathbb{R}^2.$ Moreover, we can differentiate it repeatedly term by term with absolute and uniform convergence in compact subsets of $\mathbb{R}^2.$ 

\hspace{-0.5cm}\textit{Sketch of the proof of Theorem \ref{NucleoReporductor3d}}. 
From Proposition 2.1 of \cite{2d}, 
$\{\mathbf{e}_n, \mathbf{f}_n\}_{n\in\mathbb{Z}}$ is an almost orthonormal basis of $\mathcal{W}(\mathbb{R}^2)$, and therefore, arguing as in the 
proof of Theorem \ref{NucleoReporductor3d}, we can get an orthonormal basis by replacing $\mathbf{f}_n$ by 
\begin{equation*}
\label{ftilde}
\widetilde{\mathbf{f}}_n=\mathbf{f}_n-
\langle \mathbf{f}_n,\mathbf{e}_n \rangle_{\mathcal{H}}\,\mathbf{e}_n.
\end{equation*}
Thus the orthogonal projection $\mathcal{P}$
of $\mathcal{H}(\mathbb{R}^2)$ onto $\mathcal{W}(\mathbb{R}^2)$ is given by
\begin{equation*}
\label{ProyeccionOrotogonal2d}
\displaystyle 
\mathcal{P}\mathbf{u}
=\sum_{n\in\mathbb{Z}}
\left(
\langle \mathbf{u},\mathbf{e}_n \rangle_{\mathcal{H}}\,\mathbf{e}_n
+
\frac{\langle \mathbf{u},\widetilde{\mathbf{f}}_n \rangle_{\mathcal{H}}}{\|\widetilde{\mathbf{f}}_n\|_{\mathcal{H}}^2}\,\widetilde{\mathbf{f}}_n
\right),
\qquad
\mathbf{u}\in\mathcal{H}(\mathbb{R}^2),
\end{equation*}
with convergence in $\mathcal{W}(\mathbb{R}^2)$ and also pointwise.

In particular, if $\mathbf{u}\in \mathcal{W}(\mathbb{R}^2)$, formally, for any $\bm{x}\in\mathbb{R}^2$ and $\bm{z}\in\mathbb{C}^2$ we get 
\begin{equation*}
\label{condicion2_2d}
\mathbf{u}(\bm{x})\cdot\overline{\bm{z}}
=
\mathcal{P}\mathbf{u}(\bm{x})\cdot\overline{\bm{z}}
=
\langle \mathbf{u},\Gamma(\bm{x},\cdot)\bm{z}\rangle_{\mathcal{H}},
\end{equation*}
with
\begin{equation}
\label{Gammaxy2d}
\Gamma(\bm{x},\bm{y})\bm{z}
:=\sum_{n\in\mathbb{Z}}
\Gamma_n(\bm{x},\bm{y})\bm{z},
\end{equation}
and
\begin{equation*}
\label{Gamman}
\Gamma_n(\bm{x},\bm{y})
:=
\mathbf{e}_n(\bm{y})\otimes\overline{\mathbf{e}_n(\bm{x})}
+
\frac{\widetilde{\mathbf{f}}_n(\bm{y})\otimes\overline{\widetilde{\mathbf{f}}_n(\bm{x})}}{\|\widetilde{\mathbf{f}}_n\|_{\mathcal{H}}^2}.
\end{equation*}
To prove that the series given in \eqref{Gammaxy2d} converges absolutely and uniformly on compact subsets of $\mathbb{R}^2\times\mathbb{R}^2$ we write
\begin{eqnarray}
\Gamma_n(\bm{x},\bm{y})&=&
\frac{1}{\|\widetilde{\mathbf{f}}_n\|_{\mathcal{H}}^2}
\left(
\mathbf{e}_n(\bm{y})\otimes\overline{\mathbf{e}_n(\bm{x})}
+
\mathbf{f}_n(\bm{y})\otimes\overline{\mathbf{f}_n(\bm{x})}
\right)
\nonumber
\\
&&
-
\frac{1}{\|\widetilde{\mathbf{f}}_n\|_{\mathcal{H}}^2}
\langle {\mathbf{f}_n,\mathbf{e}_n} \rangle_{\mathcal{H}}\,
\mathbf{e}_n(\bm{y})\otimes\overline{\mathbf{f}_n(\bm{x})}
\nonumber
\\
&&
-
\frac{1}{\|\widetilde{\mathbf{f}}_n\|_{\mathcal{H}}^2}
\overline{\langle {\mathbf{f}_n,\mathbf{e}_n} \rangle_{\mathcal{H}}}\,
\mathbf{f}_n(\bm{y})\otimes\overline{\mathbf{e}_n(\bm{x})}.
\label{nucleo2d}
\end{eqnarray}
Following the proof of Proposition 2.1 of \cite{2d} one can get that
\begin{equation}
\label{cota_ef_2d}
\|\nabla F_{n,k_p}\|_{\mathcal{H}}=\|\nabla^\perp F_{n,k_s}\|_{\mathcal{H}}=O(1).
\end{equation}
Besides, from the identity (26) of Proposition 2.1 of \cite{2d}, we have that
\begin{equation}
\label{cotas2d}
\langle {\mathbf{f}_n,\mathbf{e}_n} \rangle_{\mathcal{H}}=o(1)
\qquad
\text{and}
\qquad
\|\widetilde{\mathbf{f}}_n\|_{\mathcal{H}}=O(1).
\end{equation} 
From \eqref{nucleo2d}, \eqref{cota_ef_2d} and \eqref{cotas2d} one can see that, to prove the convergence result for the series in \eqref{Gammaxy2d} it is enough to prove that the series 
\begin{equation*}
\label{ConvergeBessel2d}
\sum_{n\in\mathbb{Z}}\left(J_n(r)\right)^2
\end{equation*}
converges absolutely and uniformly on compact subsets of $(0,\infty).$ This can be proved using \eqref{CotaBessel}.

Finally, for any $\bm{x}\in\mathbb{R}^2$ and $\bm{z}\in\mathbb{C}^2,$ to prove the convergence of the series
\begin{equation*}
\label{Gamma2d}
\Gamma(\bm{x},\cdot)\bm{z}
=\sum_{n\in\mathbb{Z}} 
\Gamma_n(\bm{x},\cdot)\bm{z}
\end{equation*}
in $\mathcal{W}(\mathbb{R}^2)$ it is enough to prove the convergence of the series
\begin{equation*}
\sum_{n\in\mathbb{Z}}\left|J_n(r)\right|.
\end{equation*}
Again, this is can be proved using \eqref{CotaBessel}.
\hfill $\square$



\begin{thebibliography}{}
	%
	%
	
	\bibitem {abramowitz}
	M. Abramowitz, I.A. Stegun,
	\emph{Handbook of Mathematical Functions: with Formulas, Graphs, and Mathematical Tables},
	US Government Printing Office, Washington DC, 1972. 
	
	\bibitem{A+F-G+P-E}
	J. \'Alvarez,  M. Folch-Gabayet, S. P\'erez-Esteva, 
	\emph{Banach spaces of solutions of the Helmholtz equation in the plane}, Journal of the Fourier Analysis and Applications, 2001, \textbf{7}(1): 49--62.
	
	\bibitem{Arens}
	T. Arens, 
	\emph{An approximation property of elastic Herglotz wave functions and its application in the linear sampling method},  J. Inverse Ill-Posed Probl., (2003), \textbf{11}(3): 219–233. 
	
	\bibitem{Aronszajn}
	N. Aronszajn, 
	\emph{Theory of reproducing kernels},
	Trans. Amer. Math. Soc., 1950,
	\textbf{68}: 337–404.
	
	\bibitem{BBR}
	J. A. Barcel\'o, J. M. Bennett, A. Ruiz, \emph{Mapping properties of a projection related to the Helmontz Equation}, Journal of Fourier Analysis and Appl., \textbf{9} (2003), no.~6, 541--562.
	
	\bibitem{juan}
	J. A. Barcel\'o, A. C\'ordoba, \emph{Band-limited functions: $L^p-$convergence}, Transactions of the American Mathematical Society, 1989, \textbf{313}(2): 655--669.
	
	\bibitem{2d}
	J. A. Barcel\'o, M. Folch-Gabayet, S. P\'erez-Esteva, A. Ru\'iz, M.C. Vilela, \emph{Elastic Herglotz functions in the plane}, Communications on Pure and Applied Analysis, 2010, \textbf{9}(6): 1495-1505.
	
	\bibitem {Ben}
	A. Ben-Menahem, S.J. Singh,
	\emph{Seismic Waves and Sources},
	Springer-Verlag, New York, 1981.
	
	\bibitem{Bergman} 
	S. Bergman,
	\emph{Uber die Entwicklung der harmonischen Funktionen der Ebene und des Raumes nach Orthogonalfunktionen},
	Thesis, Berlin, 1921.
	
	\bibitem{Bergman2} 
	S. Bergman,
	\emph{The Kernel function and conformal mapping}, 
	Mathematical surveys V, 1950, AMS pub.
	
	\bibitem{Berlinet}
	A. Berlinet, C. Thomas-Agnan,
	\emph{Reproducing Kernel Hilbert Spaces in Probability and Statistics},
	Springer Science+Business Media, Nueva York, 2004.
	
	
	\bibitem{vvRKHS3}
	C. Carmeli, E. De Vito, A. Toigo,
	\emph{Vector valued reproducing kernel Hilbert spaces of integrable
		functions and Mercer theorem},
	Anal. Appl., 2006, \textbf{4}(4): 377--408.
	
	\bibitem{Colton}
	D. Colton, R. Kress,
	\emph{Inverse Acoustic and Electromagnetic Scattering Theory},
	Springer-Verlag, 1992.
	
	\bibitem{DassiosRigou}
	G. Dassios, Z. Rigou, 
	\emph{Elastic Herglotz Functions}, SIAM J. Appl. Math., 1995, \textbf{55}(5): 1345--1361.
	
	\bibitem {Gradshteyn}
	I.S. Gradshteyn, I.M. Ryzhik,
	\emph{Table of Integral, Series, and Products},
	Academic Press, California, 2000.
	
	\bibitem{Hart} P. Hartman, \textit{On solutions of $\Delta V+V=0$ in an
		exterior region,} Math. Zeitschr. \textbf{71} (1959), 251--257.
	
	\bibitem{Hansen} W.W. Hansen, \textit{A new type of expansion in radiation problems,} Phys. Rev. \textbf{47} (1935),
	139--143.
	
	\bibitem{HartmanWilcox} P. Hartman, C. Wilcox, \textit{On solutions of the
		Helmholtz equation in exterior domains,} Math. Zeitschr. \textbf{75} (1961),
	228--255.
	
	\bibitem{Mercer} 
	J. Mercer,
	\textit{Functions of positive and negative type and their connection with the theory of integral
		equations}, Philos. Trans. Roy. Soc. London Ser. A \textbf{209} (1909), 415--446.
	
	\bibitem{Muller} 
	C. M\"uller,
	\textit{\"Uber die ganzen L\"osungen der Wellengleichung},
	Math. Ann., \textbf{124} (1952), 235--264.
	
	\bibitem{Paulsen}
	V. I. Paulsen, M. Raghupathi
	\textit{An Introduction to the Theory of Reproducing Kernel Hilbert Spaces},
	Cambridge University Press, Cambridge, 2016.
	
	\bibitem{vvRKHS}
	G. Pedrick, 
	\textit{Theory of Reproducing Kernels of Hilbert Spaces of Vector Valued Functions},
	University of Kansas, Lawrence, KS, 1957.
	
	\bibitem{Pelekanos} V. Sevroglou, G. Pelekanos, \textit{Two-dimensional
		elastic Herglotz functions and their applications in inverse scattering,} J.
	Elasticity, \textbf{68 } (2002), 123--144.
	
	\bibitem{P-E+V-D}
	S. P\'erez-Esteva, S. Valenzuela-D\'iaz
	\emph{Reproducing kernel for the Herglotz functions in $\mathbb{R}^n$ and solutions of the Helmholtz equation}, Journal of the Fourier Analysis and Applications, 2017, \textbf{23}: 834--862.
	
	\bibitem{vvRKHS2}
	S. Saitoh, \textit{Integral Transforms, Reproducing Kernels and Their Applications},
	Pitman Research Notes in Mathematics Series 369, Addison Wesley Longman, Harlow, 1997.
	
	\bibitem{stempak}
	K. Stempak, \textit{A weighted uniform $L^p$-estimate of Bessel functions: a note on a paper of Guo},
	Proceedings of the American Mathematical Society, {\bf 168} (2000), 2943--2945.
	
	\bibitem{Stein-Weiss}
	E. M. Stein, G. Weiss,
	\emph{Introduction to Fourier Analysis on Euclidean Spaces},
	Princeton University Press, Princeton, 1971.
	
	\bibitem{Zaremba1} 
	S. Zaremba,
	\textit{L'\'equation biharmonique et une classe remarquable de fonctions fondamentales harmoniques}, Bulletin International de l'Acad\'emie des Sciences de Cracovie (1907), 147--196.
	
	\bibitem{Zaremba2} 
	S. Zaremba,
	\textit{Sur le calcul num\'erique des fonctions demand\'ees dans le probl\`eme de Dirichlet et le probl\'eme hydrodynamique}, Bulletin International de l'Acad\'emie des Sciences de Cracovie (1908), 125--195.
\end{thebibliography}


\end{document}